\titleformat*{\section}{\large\bfseries}
\newtheorem{theorem}{Theorem}[section]
\newtheorem{lemma}[theorem]{Lemma}
\newtheorem{corollary}[theorem]{Corollary}
\newtheorem{definition}[theorem]{Definition}
\newtheorem{example}[theorem]{Example}
\newtheorem{proposition}[theorem]{Proposition}
\numberwithin{equation}{section}
\title{Best Proximity Point Results for Cyclic Orbital Contraction Mappings in $CAT_p(0)$ Metric Spaces}
\author{\large  Parveen Kumar$^1$, Ankit Kumar$^2$\footnote{Corresponding author}  and Manu Rohilla$^3$\\ \\ 
{\small $^1,^2$Department of Mathematics, Punjab Engineering College (Deemed to be University)  }\\
{\small Chandigarh-160012, India}\\{\small E-mail: $^1$parvveenbeniwal11@gmail.com, $^2$ankitkumar@pec.edu.in}\\{\small $^3$Department of Mathematics, J.C. Bose University of Science and Technology, YMCA,}\\
{\small Faridabad-121006, India}\\{\small E-mail: manurohilla25994@gmail.com}\\
 }
\date{}
\begin{document}

\maketitle
\begin{abstract}
In this paper, we introduce the concept of cyclic orbital contraction mappings which generalizes the concept of cyclic contraction mappings. We establish the existence of best proximity point of these mappings in the framework of $CAT_p(0)$ metric spaces. Also, we study the existence of best proximity point theorems for cyclic orbital contraction mappings in uniformly convex Banach spaces.\\
\textbf{Mathematics Subject Classification} 47H09, 47H10, 54H25.\\
\textbf{Keywords} $CAT_p(0)$ metric space, uniformly convex Banach space, cyclic orbital contraction mapping, best proximity point.
\end{abstract}
 \section{Introduction and preliminaries}
 Fixed point theory provides a framework to solve the equations $\mathfrak{G}\zeta=\zeta$, where $\mathfrak{G}$ is a mapping defined on a subset of a normed linear space or metric space. When $\mathfrak{G}: \Omega \rightarrow \Delta$ is a non-self mapping, a fixed point may fail to exist. In such cases, the objective is to find $\zeta \in \Omega$ which is closest to $\mathfrak{G}\zeta$ in some suitable sense. This leads to the development of best approximation and best proximity point theorems.  According to Fan's \cite{n1} classical approximation theorem, if  $\mathfrak{V}$ is a nonempty, compact and convex subset of  a normed linear space $\mathfrak{N}$ and $\mathfrak{G}:\mathfrak{V} \rightarrow \mathfrak{N}$ is a continuous mapping, then there exists $\varrho^* \in \mathfrak{V}$ such that
$$\Vert \varrho^*-\mathfrak{G}(\varrho^*)\Vert =\min_{\varrho \in \mathfrak{V}} \Vert \varrho-\mathfrak{G}(\varrho^*)\Vert. $$
Reich  \cite{n2}, Prolla \cite{n3}, and Sehgal and Singh \cite{n4,n5} later extended this classical result.  Best proximity point theorems provide optimal approximate solutions. Best proximity points of nonexpansive and contraction  mappings have been widely studied in Banach and metric spaces (see \cite{n6,n7,n8,n9,n10,n11}).

We shall denote the set of natural numbers by $\mathbb{N}$ throughout the paper. Let $\Omega$ and $\Delta$ be nonempty subsets of a metric space $(\mathfrak{X},d)$. Define
\begin{align*}
dist(\Omega,\Delta) & = \inf\{d(\varsigma,\vartheta): \varsigma \in \Omega \thinspace \thinspace \mbox{ and } \thinspace \vartheta \in \Delta \},\\
\Omega_0 &=  \{\varsigma \in \Omega :d(\varsigma,\vartheta) =dist(\Omega,\Delta) \thinspace \thinspace \mbox{for some} \thinspace \vartheta \in \Delta \},\\ 
\Delta_0 & =  \{\vartheta \in \Delta :d(\varsigma,\vartheta ) =dist(\Omega,\Delta) \thinspace \thinspace \mbox{for some} \thinspace \varsigma \in \Omega \}.
  \end{align*}
  Let $\Xi:\Omega\cup \Delta \rightarrow \Omega \cup \Delta$ be a mapping satisfying $\Xi(\Omega) \subset \Delta$ and  $\Xi(\Delta) \subset \Omega$. Then $\varrho \in \Omega \cup \Delta$ is called a best proximity point of $\Xi$ if $d(\varrho,\Xi\varrho)=dist(\Omega,\Delta)$. A best proximity point reduces to a fixed point, when $\Omega \cap \Delta \neq \emptyset $.
  
  Let  $\mathfrak{X}$ be a Banach space. If there exists a strictly increasing function $\Upsilon:(0,2] \rightarrow [0,1]$ such that for all $\sigma_1,\sigma_2,\sigma_3 \in \mathfrak{X}$, $M>0$ and $m \in [0,2M]$ satisfying $\Vert \sigma_1-\sigma_3 \Vert \leq M$, $\Vert \sigma_2-\sigma_3 \Vert \leq M$ and $\Vert \sigma_1-\sigma_2 \Vert \geq m$, we have $\Big\Vert \frac{\sigma_1+\sigma_2}{2}-\sigma_3 \Big\Vert \leq \Big(1-\Upsilon \Big(\frac{m}{M} \Big) \Big)M$. Then  $\mathfrak{X}$ is called a uniformly convex space.

Kirk et al. \cite{n12}  established the nonemptiness of $\Omega_0$ and $\Delta_0$.
\begin{lemma}
\cite[Lemma 3.2]{n12}  Let $\mathfrak{X}$ be a reflexive Banach space. Suppose that $\Omega$ and $\Delta$ are nonempty, closed and convex subsets of $\mathfrak{X}$. Assume that $\Omega$ is bounded. Then $\Omega$ and $\Delta$ are nonempty.
\end{lemma}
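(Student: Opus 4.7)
The statement in the excerpt almost certainly contains a typo: the conclusion should read \emph{$\Omega_0$ and $\Delta_0$ are nonempty}, and that is what I will plan to prove. The plan is to exhibit a single pair $(x_0,y_0)\in\Omega\times\Delta$ with $\|x_0-y_0\|=\mathrm{dist}(\Omega,\Delta)$, from which $x_0\in\Omega_0$ and $y_0\in\Delta_0$ follow at once. The natural tool is weak compactness: in a reflexive Banach space, closed bounded convex subsets are weakly compact (Kakutani), and any closed convex subset is weakly closed (Mazur); additionally, the norm is weakly lower semicontinuous.

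First I would choose sequences $(x_n)\subset\Omega$ and $(y_n)\subset\Delta$ satisfying $\|x_n-y_n\|\to\mathrm{dist}(\Omega,\Delta)$, using the definition of the infimum. Since $\Omega$ is bounded, $(x_n)$ is bounded; from $\|y_n\|\le\|x_n\|+\|x_n-y_n\|$ and the convergence of $\|x_n-y_n\|$, the sequence $(y_n)$ is bounded as well. This boundedness is exactly what lets reflexivity do its work on $\Delta$, even though $\Delta$ itself need not be bounded.

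Next I would extract, by reflexivity together with Kakutani's theorem applied to suitable closed balls, a common subsequence along which $x_{n_k}\rightharpoonup x_0$ and $y_{n_k}\rightharpoonup y_0$ in $\mathfrak{X}$. Because $\Omega$ and $\Delta$ are closed and convex, Mazur's theorem makes them weakly closed, so $x_0\in\Omega$ and $y_0\in\Delta$. Then weak lower semicontinuity of $\|\cdot\|$ applied to the sequence $x_{n_k}-y_{n_k}\rightharpoonup x_0-y_0$ gives
\[
\|x_0-y_0\|\;\le\;\liminf_{k\to\infty}\|x_{n_k}-y_{n_k}\|\;=\;\mathrm{dist}(\Omega,\Delta),
\]
while the reverse inequality is automatic from the definition of $\mathrm{dist}(\Omega,\Delta)$. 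Hence $\|x_0-y_0\|=\mathrm{dist}(\Omega,\Delta)$, proving $x_0\in\Omega_0$ and $y_0\in\Delta_0$.

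The only real obstacle is the simultaneous extraction: one must be careful to pass to a single subsequence on which both $(x_n)$ and $(y_n)$ converge weakly. A clean way is to first extract $x_{n_k}\rightharpoonup x_0$ using weak compactness of the weak closure of $\Omega$, and then extract a further subsequence of $(y_{n_k})$ using weak compactness of a closed ball of sufficiently large radius in $\mathfrak{X}$. Everything else is routine application of the standard weak-topology toolkit; no convexity-type structure beyond what Mazur's theorem already provides is needed.
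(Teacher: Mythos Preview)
The paper does not actually prove this lemma; it merely quotes it from Kirk--Reich--Veeramani \cite{n12}, so there is no in-paper argument to compare against. Your reading of the statement is correct: the introductory sentence ``Kirk et al.\ established the nonemptiness of $\Omega_0$ and $\Delta_0$'' confirms that the conclusion should be that $\Omega_0$ and $\Delta_0$ are nonempty, not $\Omega$ and $\Delta$.

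Your argument is the standard one and is correct. One small technical remark: Kakutani's theorem gives weak \emph{compactness} of closed bounded convex sets in a reflexive space, but to extract a weakly convergent \emph{subsequence} you implicitly invoke the Eberlein--\v{S}mulian theorem (weak compactness is equivalent to weak sequential compactness in a Banach space). Alternatively, one can simply quote directly that in a reflexive Banach space every bounded sequence has a weakly convergent subsequence. With that caveat, the chain---minimizing sequences, boundedness of $(y_n)$ via the triangle inequality, diagonal extraction of a common weakly convergent subsequence, Mazur to keep the limits in $\Omega$ and $\Delta$, and weak lower semicontinuity of the norm to close the inequality---is exactly the right route and needs no further ingredients.
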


Eldred and Veeramani \cite{n6} introduced the notion of cyclic contraction mappings established the existence of best proximity points of these mappings.
\begin{definition}
\emph{\cite[Definition 2.3]{n6} Let $(\mathfrak{X},d)$ be a metric space. Let $\Omega$ and $\Delta$ be nonempty subsets of $\mathfrak{X}$. A mapping $\Xi:\Omega \cup \Delta \rightarrow \Omega \cup \Delta$ is  said to be a cyclic contraction mapping if for all $\varsigma \in \Omega$ and $\vartheta \in \Delta$, we have}

\emph{(i) $\Xi(\Omega) \subset \Delta$ and $\Xi(\Delta) \subset \Omega$,}

\emph{(ii) $d(\Xi\varsigma,\Xi\vartheta) \leq \eta d(\varsigma,\vartheta)+(1-\eta)dist(\Omega,\Delta)$  for some $\eta \in (0,1)$.}
\end{definition}
 \begin{theorem}
\cite[Theorem 3.10]{n6}  Let $\Omega$ and $\Delta$ be nonempty, convex and closed subsets of  a uniformly convex Banach space $\mathfrak{X}$. Let  $\Xi:\Omega \cup \Delta \rightarrow \Omega \cup \Delta$ be a cyclic contraction mapping. Then $\Xi$ has a unique best proximity point $\varrho$ in $\Omega$.
\end{theorem}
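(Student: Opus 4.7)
The plan is to analyze the Picard iterates of $\Xi$. Fix $\varsigma_0\in\Omega$ and set $\varsigma_{n+1}=\Xi\varsigma_n$, so $\varsigma_{2n}\in\Omega$ and $\varsigma_{2n+1}\in\Delta$; write $D=\mathrm{dist}(\Omega,\Delta)$. Condition (ii) applied to consecutive iterates gives
\[
 d(\varsigma_{n+1},\varsigma_{n+2})\le\eta\,d(\varsigma_n,\varsigma_{n+1})+(1-\eta)D,
\]
so an induction yields $d(\varsigma_n,\varsigma_{n+1})-D\le\eta^n\bigl(d(\varsigma_0,\varsigma_1)-D\bigr)$, and hence $d(\varsigma_n,\varsigma_{n+1})\to D$.

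The main step is to upgrade this to strong convergence of $\{\varsigma_{2n}\}$, and this is where uniform convexity enters. I would first establish the following key lemma: if $\{x_n\},\{z_n\}\subset\Omega$ and $\{y_n\}\subset\Delta$ satisfy $\|x_n-y_n\|\to D$ and $\|z_n-y_n\|\to D$, then $\|x_n-z_n\|\to 0$. This is proved by contradiction: if $\|x_{n_k}-z_{n_k}\|\ge m>0$ along a subsequence and one picks $M$ slightly above $D$ so that eventually $\|x_{n_k}-y_{n_k}\|,\|z_{n_k}-y_{n_k}\|\le M$, the uniform-convexity hypothesis gives
\[
 \Big\|\tfrac{x_{n_k}+z_{n_k}}{2}-y_{n_k}\Big\|\le\bigl(1-\Upsilon(m/M)\bigr)M,
\]
which is strictly less than $D$ once $M$ is close enough to $D$, contradicting $\tfrac{x_{n_k}+z_{n_k}}{2}\in\Omega$, $y_{n_k}\in\Delta$. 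Applying the lemma with $x_n=\varsigma_{2n}$, $z_n=\varsigma_{2n+2}$, $y_n=\varsigma_{2n+1}$ gives $\|\varsigma_{2n}-\varsigma_{2n+2}\|\to 0$; the full Cauchy property is obtained by re-applying the same lemma to pairs $\varsigma_{2n},\varsigma_{2m}$ with $y=\varsigma_{2n+1}$, once a telescoping of (ii) has shown $d(\varsigma_{2n+1},\varsigma_{2m})\to D$ for $m\ge n$. By completeness, $\varsigma_{2n}\to\varrho\in\Omega$.

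To identify $\varrho$ as a best proximity point, apply (ii) one more time: $d(\varsigma_{2n},\Xi\varrho)=d(\Xi\varsigma_{2n-1},\Xi\varrho)\le\eta\,d(\varsigma_{2n-1},\varrho)+(1-\eta)D$. The triangle inequality $d(\varsigma_{2n-1},\varrho)\le d(\varsigma_{2n-1},\varsigma_{2n})+d(\varsigma_{2n},\varrho)$ together with the lower bound $d(\varsigma_{2n-1},\varrho)\ge D$ force $d(\varsigma_{2n-1},\varrho)\to D$; hence $\limsup_n d(\varsigma_{2n},\Xi\varrho)\le D$, and letting $\varsigma_{2n}\to\varrho$ yields $d(\varrho,\Xi\varrho)=D$.

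For uniqueness, suppose $\varrho'\in\Omega$ also satisfies $d(\varrho',\Xi\varrho')=D$. A preliminary step shows every best proximity point is a fixed point of $\Xi^2$: from $d(\varrho,\Xi\varrho)=D$ and (ii) one obtains $d(\Xi\varrho,\Xi^2\varrho)=D$, after which strict convexity forces $\varrho=\Xi^2\varrho$ (and likewise $\Xi^2\varrho'=\varrho'$). Using these identities, two applications of (ii) to the pairs $(\varrho,\Xi\varrho')$ and $(\varrho',\Xi\varrho)$ chain to $(1-\eta^2)d(\varrho,\Xi\varrho')\le(1-\eta^2)D$, i.e.\ $d(\varrho,\Xi\varrho')=D$; the midpoint argument applied to $\varrho$ and $\varrho'$ in $\Omega$, both at distance $D$ from $\Xi\varrho'\in\Delta$, then yields $\varrho=\varrho'$. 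The principal obstacle in this blueprint is the uniform-convexity/Cauchy step: converting the asymptotic $d(\varsigma_n,\varsigma_{n+1})\to D$ into a Cauchy property of $\{\varsigma_{2n}\}$ requires a quantitative use of the modulus $\Upsilon$ with a carefully chosen $M$, whereas every other step reduces to routine combinations of (ii), the triangle inequality and strict convexity of closed convex sets.
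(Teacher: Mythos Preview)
The paper does not supply its own proof of this statement: it is quoted verbatim from Eldred--Veeramani \cite{n6} as a preliminary result. Your outline is precisely the Eldred--Veeramani argument, and it is also the template the present paper follows (via Lemmas~\ref{lemma 2.2}--\ref{lemma 2.3} and Proposition~\ref{proposition 2.4}) when proving its generalizations, Theorems~\ref{theorem 2.6} and~\ref{theorem 2.7}. So the approach is correct and essentially identical.

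One technical remark on the Cauchy step. The ``key lemma'' you state (three sequences with the \emph{same} index) is the analog of Lemma~\ref{lemma 2.3}; applying it with $x_n=\varsigma_{2n}$, $z_n=\varsigma_{2n+2}$, $y_n=\varsigma_{2n+1}$ correctly gives $\|\varsigma_{2n}-\varsigma_{2n+2}\|\to0$. But to pass from this to the full Cauchy property you cannot simply ``re-apply the same lemma to pairs $\varsigma_{2n},\varsigma_{2m}$'': that lemma compares sequences indexed by a single $n$, whereas here you need control uniform in $m>n$. What is actually required is the companion lemma (the Banach-space version of Lemma~\ref{lemma 2.2}, i.e.\ \cite[Lemma~3.7]{n6}), whose hypothesis is of the form ``for every $\epsilon>0$ there exists $N$ with $d(\varsigma_{2m},\varsigma_{2n+1})\le D+\epsilon$ for all $m>n\ge N$''. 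Your telescoping of (ii) does produce exactly this hypothesis, so the argument goes through once you invoke the correct second lemma; just be aware that two distinct uniform-convexity lemmas are in play, not one.
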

 Let $(\mathfrak{X},d)$ be a metric space. For a self mapping $\Xi:\mathfrak{X} \rightarrow \mathfrak{X}$, by convention $\Xi^{n+1}=\Xi \circ \Xi^n$ and $\Xi^0=\mathfrak{I}$, where $\mathfrak{I}$  is the identity mapping on $\mathfrak{X}$. For a given mapping $\Xi:\mathfrak{X} \rightarrow \mathfrak{X}$, the associated orbit and double orbit are defined by
 \begin{align*}
 \mathcal{O}_{\Xi}(\varsigma)&=\{\Xi^n \varsigma:n \in \mathbb{N}\cup \{0\}\},\\
 \mathcal{O}_{\Xi}(\varsigma,\vartheta)&=\mathcal{O}_{\Xi}(\varsigma) \cup \mathcal{O}_{\Xi}(\vartheta).
 \end{align*}
Heged\H us and Szil\'agyi \cite{n13} studied the fixed point of a mapping involving orbits. Walter \cite{n14} explored fixed point results for contraction mapping involving orbits and obtained the following theorem:
\begin{theorem}
\cite[Theorem 4]{n14} Let $(\mathfrak{X},d)$ be a complete metric space. Let $\Xi: \mathfrak{X} \rightarrow \mathfrak{X}$  be a mapping such that all the orbits are bounded and for every $\varsigma, \vartheta  \in \mathfrak{X}$, we have 
$$d(\Xi\varsigma,\Xi\vartheta) \leq  \Gamma(diam \thinspace \mathcal{O}_{\Xi}(\varsigma,\vartheta)),$$
where $\Gamma:[0,\infty) \rightarrow [0,\infty)$ is a function such that $\Gamma$ is continuous, increasing and $\Gamma(w)<w$ for all $w>0$. Then $\Xi$ has a unique fixed point.   
\end{theorem}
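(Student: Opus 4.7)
My plan is to construct a fixed point by Picard iteration while controlling the diameters of the orbit tails via $\Gamma$, and then verify uniqueness by a one-line argument. First I would fix an arbitrary $\varsigma_0\in\mathfrak{X}$, set $\varsigma_n:=\Xi^n\varsigma_0$, and study $d_n := \mathrm{diam}\,\mathcal{O}_\Xi(\varsigma_n)$; this quantity is finite by the boundedness hypothesis and nonincreasing in $n$ because $\mathcal{O}_\Xi(\varsigma_{n+1})\subset\mathcal{O}_\Xi(\varsigma_n)$. For any $i,j\geq n+1$, applying the contraction hypothesis to $\varsigma_{i-1}$ and $\varsigma_{j-1}$ and noting that the relevant orbits both lie in $\mathcal{O}_\Xi(\varsigma_n)$ would give
\[
d(\varsigma_i,\varsigma_j)\;\leq\;\Gamma\bigl(\mathrm{diam}\,\mathcal{O}_\Xi(\varsigma_{i-1},\varsigma_{j-1})\bigr)\;\leq\;\Gamma(d_n).
\]
Taking the supremum, $d_{n+1}\leq\Gamma(d_n)$; continuity of $\Gamma$ then forces the decreasing limit $d_\infty$ to satisfy $d_\infty\leq\Gamma(d_\infty)$, which together with $\Gamma(w)<w$ for $w>0$ yields $d_\infty=0$. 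Hence $\{\varsigma_n\}$ is Cauchy and converges to some $\varsigma^*\in\mathfrak{X}$ by completeness.

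Next I would repeat the same argument with the double orbit $\mathcal{O}_\Xi(\Xi^n\varsigma,\Xi^n\vartheta)$ in place of $\mathcal{O}_\Xi(\varsigma_n)$, which shows its diameter tends to $0$ for any $\varsigma,\vartheta$; in particular $d(\Xi^n\varsigma,\Xi^n\vartheta)\to 0$, so every Picard sequence converges to the same limit $\varsigma^*$, and choosing the starting point to be $\varsigma^*$ itself gives $\Xi^n\varsigma^*\to\varsigma^*$. To conclude $\Xi\varsigma^*=\varsigma^*$, I would analyze $D^*:=\mathrm{diam}\,\mathcal{O}_\Xi(\varsigma^*)$. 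For each $k\geq 1$, since $\Xi^n\varsigma^*\to\varsigma^*$,
\[
d(\varsigma^*,\Xi^k\varsigma^*)\;=\;\lim_{n\to\infty}d(\Xi^n\varsigma^*,\Xi^k\varsigma^*)\;\leq\;\mathrm{diam}\{\Xi^m\varsigma^*:m\geq 1\},
\]
so $D^*$ equals this one-shifted tail diameter. The hypothesis applied to pairs $(\Xi^{i-1}\varsigma^*,\Xi^{j-1}\varsigma^*)$ with $i,j\geq 1$ bounds that tail diameter above by $\Gamma(D^*)$, giving $D^*\leq\Gamma(D^*)$, hence $D^*=0$ and $\Xi\varsigma^*=\varsigma^*$. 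For uniqueness, a second fixed point $\vartheta^*\neq\varsigma^*$ would satisfy $\mathcal{O}_\Xi(\varsigma^*,\vartheta^*)=\{\varsigma^*,\vartheta^*\}$, so the hypothesis would give $d(\varsigma^*,\vartheta^*)\leq\Gamma(d(\varsigma^*,\vartheta^*))<d(\varsigma^*,\vartheta^*)$, a contradiction.

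The main obstacle I anticipate is identifying $D^*$ with the tail diameter $\mathrm{diam}\{\Xi^m\varsigma^*:m\geq 1\}$ without assuming continuity of $\Xi$ at $\varsigma^*$. My plan to avoid that hypothesis is to exploit only the continuity of the metric together with the already-established convergence $\Xi^n\varsigma^*\to\varsigma^*$ from the double-orbit step; with that identification in hand, everything reduces to the self-improving bound $D^*\leq\Gamma(D^*)$ and the strict inequality $\Gamma(w)<w$.
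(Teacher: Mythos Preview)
The paper does not supply a proof of this statement: it is quoted verbatim as \cite[Theorem~4]{n14} (Walter's theorem) purely as background, and the paper moves on immediately afterward. So there is no in-paper argument against which to compare your proposal.

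That said, your outline is mathematically sound and is in fact the standard route to Walter's result. The key mechanism---showing that the tail-orbit diameters $d_n=\mathrm{diam}\,\mathcal{O}_\Xi(\varsigma_n)$ satisfy $d_{n+1}\le\Gamma(d_n)$ and hence shrink to $0$---is exactly what drives the Cauchy property, and your double-orbit version correctly upgrades this to uniqueness of the limit. Your treatment of the fixed-point step is also correct: once you know $\Xi^n\varsigma^*\to\varsigma^*$, the identification $D^*=\mathrm{diam}\{\Xi^m\varsigma^*:m\ge1\}$ uses only continuity of $d$, and the self-improving bound $D^*\le\Gamma(D^*)$ forces $D^*=0$. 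The uniqueness line is immediate. One small expository suggestion: in the first step you could make explicit that $\Gamma$ being increasing is what lets you replace $\mathrm{diam}\,\mathcal{O}_\Xi(\varsigma_{i-1},\varsigma_{j-1})$ by the larger $d_n$ inside $\Gamma$; everything else is clean.
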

In 2016, Bessenyei \cite{n15} rediscovered the theorems of Heged$\ddot{\mbox{u}}$s and Szil\'agyi \cite{n13} and Walter \cite{n14}, and subsequently introduced  the notion of weak quasicontraction mappings using the concept of comparison function and studied the existence of fixed point of weak quasicontraction mappings in the context of complete metric spaces. Hafshejani et al. \cite{n21} investigated the existence of fixed points and best proximity points of cyclic and noncyclic Fisher quasi-contraction mappings. Recently, Digar et al. \cite{n16} and Gabeleh \cite{n17} explored the best proximity point results for pointwise cyclic contraction mappings with respect to orbits and modified generalized pointwise cyclic contraction mappings with respect to orbits, respectively.   
 
Let $(\mathfrak{X},d)$ be a metric space. A path in $(\mathfrak{X},d)$ is a continuous mapping $\chi:[0,1] \rightarrow \mathfrak{X}$. A path $\chi: [0,1] \rightarrow \mathfrak{X}$ is a geodesic if $ d(\chi(\mu), \chi(\kappa))=\vert \mu-\kappa \vert d(\chi(0), \chi(1))$ for every $ \mu,\kappa \in [0,1]$. A metric space $(\mathfrak{X},d)$ is a   geodesic space whenever each pair of points $\mu,\kappa \in \mathfrak{X}$ can be joined by at least one geodesic such that $\chi(0)=\mu$ and $\chi(1)=\kappa$. The geodesic is denoted by $[\mu,\kappa]$. Geodesics determined  by its endpoints are not necessarily unique in general. For $\nu \in [\mu,\kappa]$, we write $\nu=(1-\lambda)\mu \oplus \lambda \kappa$, where $\lambda=\frac{d(\mu,\nu)}{d(\mu,\kappa)}$ and $\mu \neq \kappa$. If for each pair 
$\mu,\kappa \in \mathfrak{X}$, there is exactly one geodesic connecting them, then the metric space $(\mathfrak{X},d)$ is called uniquely geodesic. A subset $\mathfrak{M}$ of $\mathfrak{X}$ is called convex  if $[\mu,\kappa]\subset \mathfrak{M}$ for any $\mu,\kappa \in \mathfrak{M}$. Complete Riemannian manifolds and normed vector spaces are examples of geodesic spaces. 

A geodesic triangle $\bigtriangleup(\mu,\kappa,\nu)$ in a geodesic metric space $(\mathfrak{X},d)$ consists of three points (vertices) $\mu,\kappa,\nu$ in $\mathfrak{X}$ and a geodesic segment (edge) between each pair of vertices. For a geodesic triangle $\bigtriangleup(\mu,\kappa,\nu)$ in $(\mathfrak{X},d)$, a comparison triangle is $\overline{\bigtriangleup}(\mu,\kappa,\nu)=\bigtriangleup(\overline{\mu},\overline{\kappa},\overline{\nu})$ in the Banach space $l_p$, where $p \geq 2$ and $\Vert \overline{\mu}-\overline{\kappa}\Vert=d(\mu,\kappa)$, $\Vert \overline{\kappa}-\overline{\nu}\Vert=d(\kappa,\nu)$ and $\Vert \overline{\mu}-\overline{\nu}\Vert=d(\mu,\nu)$. A point $\overline{\xi} \in [\overline{\mu},\overline{\kappa}]$ is called a comparison point for $\xi \in [\mu,\kappa]$ if $d(\overline{\mu},\overline{\xi})=d(\mu,\xi)$. A metric space $(\mathfrak{X},d)$ is called a $CAT(0)$ space \cite{n18} if 

(i) $(\mathfrak{X},d)$ is geodesically connected,

(ii) every geodesic triangle in $\mathfrak{X}$ is atleast as thin as its comparison triangle in the Euclidean plane.
\begin{definition}
\emph{\cite[Definition 3.2]{n19} A geodesic metric space $(\mathfrak{X},d)$ is called a  $CAT_p(0)$ space, with $p \geq 2$ if for any geodesic triangle $\bigtriangleup$ in $\mathfrak{X}$, there exists a comparison triangle $\overline{\bigtriangleup}$ in $l_p$ such that for all $\mu,\kappa \in \bigtriangleup$ and $\overline{\mu},\overline{\kappa} \in \overline{\bigtriangleup}$, we have $d(\mu,\kappa)\leq \Vert \overline{\mu}-\overline{\kappa}\Vert$. }
\end{definition}
An example of a complete $CAT_p(0)$ space is the normed vector space $l_p$ ($p \geq 2$). In the setting of a complete $CAT_p(0)$ metric space, with $p \geq 2$, Shukri \cite{n20} obtained best proximity point results for cyclic contraction mappings with the aid of the following two lemmas:
\begin{lemma}\label{lemma 2.2}\cite[Lemma 3.1]{n20}
Let  $\Omega$ and $\Delta$ be nonempty and closed  subsets of a complete $CAT_{p}(0)$ metric space, with $p \geq 2$. Assume that $\Omega$ is convex.  Let $\{\varsigma_{r}\}$ and $ \{\varrho_{r}\}$ be sequences in $\Omega$ and $\{\vartheta_{r}\}$ be a sequence in $\Delta$ such that

  (i) For each $ \epsilon >0$, there exists $\mathcal{M}_{0}$ such that  for all $ s>r \geq \mathcal{M}_{0}$, $d(\varsigma_{s},\vartheta_{r})\leq dist(\Omega,\Delta)+\epsilon$,
  
  (ii) $ d(\varrho_{r},\vartheta_{r})\rightarrow dist(\Omega,\Delta)$.\\
 Then for each $ \epsilon >0$, there exists $\mathcal{M}_{1}$ such that for all $s > r \geq \mathcal{M}_{1}$,  $d(\varsigma_{s},\varrho_{r}) \leq \epsilon$ .
\end{lemma}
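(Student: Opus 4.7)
The plan is to argue by contradiction, transferring the problem from the $CAT_p(0)$ space $\mathfrak{X}$ to the uniformly convex comparison space $l_p$. Suppose the conclusion fails. Then there exist $\epsilon_0>0$ and subsequences, which I still denote by $\{\varsigma_{s}\}$ and $\{\varrho_{r}\}$ with $s>r$ and $r\to\infty$, such that $d(\varsigma_{s},\varrho_{r})>\epsilon_0$ for all relevant indices. Fix a small $\epsilon>0$ to be specified later and use hypotheses (i) and (ii) together with a sufficiently large index $\mathcal{M}$ so that for all $s>r\geq \mathcal{M}$ one has $d(\varsigma_{s},\vartheta_{r})\leq dist(\Omega,\Delta)+\epsilon$ and $d(\varrho_{r},\vartheta_{r})\leq dist(\Omega,\Delta)+\epsilon$.

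Next I would form, for each such pair $(r,s)$, the midpoint $\nu_{r}\in\Omega$ of $\varsigma_{s}$ and $\varrho_{r}$ along a geodesic in $\Omega$, which exists because $\Omega$ is a convex subset of the geodesic space $\mathfrak{X}$. Consider the geodesic triangle $\bigtriangleup(\varsigma_{s},\varrho_{r},\vartheta_{r})$ and its comparison triangle $\overline{\bigtriangleup}(\overline{\varsigma_{s}},\overline{\varrho_{r}},\overline{\vartheta_{r}})$ in $l_p$. Since $\nu_r$ is the midpoint of the edge $[\varsigma_s,\varrho_r]$, a direct length computation shows that its comparison point $\overline{\nu_r}$ is exactly the midpoint of $\overline{\varsigma_s}$ and $\overline{\varrho_r}$ in $l_p$. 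The $CAT_p(0)$ inequality then yields $d(\nu_r,\vartheta_r)\leq \Vert \overline{\nu_r}-\overline{\vartheta_r}\Vert$.

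Set $M:=dist(\Omega,\Delta)+\epsilon$. By construction $\Vert \overline{\varsigma_s}-\overline{\vartheta_r}\Vert\leq M$, $\Vert \overline{\varrho_r}-\overline{\vartheta_r}\Vert\leq M$ and $\Vert \overline{\varsigma_s}-\overline{\varrho_r}\Vert\geq \epsilon_0$. Applying the uniform convexity of $l_p$ with modulus $\Upsilon$ gives
\[
\Vert \overline{\nu_r}-\overline{\vartheta_r}\Vert \leq \left(1-\Upsilon\!\left(\tfrac{\epsilon_0}{M}\right)\right)M.
\]
Since $\nu_r\in\Omega$ and $\vartheta_r\in\Delta$, however, $d(\nu_r,\vartheta_r)\geq dist(\Omega,\Delta)$. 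Combining the two bounds gives $dist(\Omega,\Delta)\leq M-M\Upsilon(\epsilon_0/M)$, that is, $M\,\Upsilon(\epsilon_0/M)\leq \epsilon$. Because $\Upsilon$ is strictly increasing and positive on $(0,2]$, choosing $\epsilon>0$ small enough at the outset produces a contradiction, completing the proof.

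The main obstacle is the application of uniform convexity in $l_p$: one has to (a) verify that the comparison point of the in-space midpoint is the genuine $l_p$-midpoint, which rests on the length-preserving property of the comparison triangle along a single edge, and (b) handle the edge case $dist(\Omega,\Delta)=0$, in which one simply redefines $M=\epsilon$ and reaches the same contradiction via $\Upsilon(\epsilon_0/\epsilon)\leq 1$ failing once $\epsilon_0>\epsilon$. A secondary subtlety is that one must keep $\epsilon_0$ fixed while shrinking $\epsilon$, so the quantifier order of $\mathcal{M}_1$ is preserved.
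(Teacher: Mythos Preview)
The paper does not prove this lemma; it merely quotes it from Shukri~\cite{n20}. So there is no in-paper proof to compare against, and your argument is being assessed on its own merits.

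Your overall strategy is correct and is the standard one: pass to a comparison triangle in $l_p$, note that the comparison point of the in-space midpoint is the affine midpoint in $l_p$ (this is exactly point~(a), and it holds because the comparison map preserves distances along each edge and $l_p$ is strictly convex, so its geodesics are straight segments), then invoke uniform convexity of $l_p$ to force $d(\nu_r,\vartheta_r)$ strictly below $dist(\Omega,\Delta)$, contradicting $\nu_r\in\Omega$. The contradiction argument in the main case $dist(\Omega,\Delta)>0$ is clean: with $M=dist(\Omega,\Delta)+\epsilon$, the bound $M\,\Upsilon(\epsilon_0/M)\le\epsilon$ fails for $\epsilon$ small because the left side stays bounded away from~$0$.

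One small correction: your handling of the edge case $dist(\Omega,\Delta)=0$ is not quite right as written. The inequality $\Upsilon(\epsilon_0/\epsilon)\le 1$ never ``fails'', since $\Upsilon$ takes values in $[0,1]$. The clean argument here bypasses uniform convexity entirely: if $dist(\Omega,\Delta)=0$ then $d(\varsigma_s,\varrho_r)\le d(\varsigma_s,\vartheta_r)+d(\vartheta_r,\varrho_r)\le 2\epsilon$, so choosing $\epsilon<\epsilon_0/2$ already contradicts $d(\varsigma_s,\varrho_r)>\epsilon_0$. With that adjustment your proof is complete.
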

\begin{lemma}\label{lemma 2.3}\cite[Lemma 3.2]{n20}
Let  $\Omega$ and $\Delta$ be nonempty and closed  subsets of a complete $CAT_{p}(0)$ metric space, with $p \geq 2$. Assume that $\Omega$ is convex. Let $ \{\varsigma_{r}\}$ and $ \{\varrho_{r}\}$ be sequences in $\Omega$ and $\{\vartheta_{r}\}$ be a sequence in $\Delta$ such that
  
 (i) $d(\varsigma_{r},\vartheta_{r}) \rightarrow dist(\Omega,\Delta)$,
 
 (ii) $d(\varrho_{r},\vartheta_{r}) \rightarrow dist(\Omega,\Delta)$.\\
 Then $d(\varsigma_{r},\varrho_{r}) \rightarrow 0.$
 \end{lemma}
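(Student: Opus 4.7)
The plan is to exploit the convexity of $\Omega$ by introducing the midpoint $m_r = \frac{1}{2}\varsigma_r \oplus \frac{1}{2}\varrho_r$, which lies in $\Omega$ by convexity, and to combine the $CAT_p(0)$ comparison property with Clarkson's inequality in $l_p$ to obtain a parallelogram-type estimate that squeezes $d(\varsigma_r,\varrho_r)$ to $0$. This mirrors the standard $CAT(0)$ argument (where one uses the parallelogram law), but replaces the inner-product identity with its $l_p$ analogue.

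First I would fix $r$ and pick a geodesic triangle $\bigtriangleup(\varsigma_r,\varrho_r,\vartheta_r)$ in $\mathfrak{X}$ together with a comparison triangle $\overline{\bigtriangleup}(\overline{\varsigma}_r,\overline{\varrho}_r,\overline{\vartheta}_r)$ in $l_p$. The midpoint $\overline{m}_r = \frac{1}{2}(\overline{\varsigma}_r + \overline{\varrho}_r)$ is the comparison point for $m_r$ on the edge $[\overline{\varsigma}_r,\overline{\varrho}_r]$, so the $CAT_p(0)$ condition yields $d(m_r,\vartheta_r) \leq \|\overline{m}_r - \overline{\vartheta}_r\|$. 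Next, Clarkson's inequality in $l_p$ for $p \geq 2$ applied to $\overline{\varsigma}_r - \overline{\vartheta}_r$ and $\overline{\varrho}_r - \overline{\vartheta}_r$ gives
$$\|\overline{m}_r - \overline{\vartheta}_r\|^p + \frac{1}{2^p}\|\overline{\varsigma}_r - \overline{\varrho}_r\|^p \leq \tfrac{1}{2}\bigl(\|\overline{\varsigma}_r - \overline{\vartheta}_r\|^p + \|\overline{\varrho}_r - \overline{\vartheta}_r\|^p\bigr).$$
Translating through the defining identities of the comparison triangle yields
$$d(m_r, \vartheta_r)^p + \frac{1}{2^p}d(\varsigma_r, \varrho_r)^p \leq \tfrac{1}{2}\bigl(d(\varsigma_r, \vartheta_r)^p + d(\varrho_r, \vartheta_r)^p\bigr).$$

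Finally, since $m_r \in \Omega$, we have $d(m_r, \vartheta_r)^p \geq dist(\Omega,\Delta)^p$. By hypotheses (i) and (ii), the right-hand side converges to $dist(\Omega,\Delta)^p$ as $r \to \infty$, so subtracting $dist(\Omega,\Delta)^p$ from both sides forces $\limsup_{r\to\infty} d(\varsigma_r,\varrho_r)^p \leq 0$, which gives the desired conclusion.

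The main obstacle is passing from the abstract $CAT_p(0)$ comparison property, which only gives one-sided bounds for individual distances, to a genuinely usable parallelogram-type inequality relating $d(\varsigma_r,\varrho_r)$ to the two known small quantities $d(\varsigma_r,\vartheta_r)$ and $d(\varrho_r,\vartheta_r)$. Invoking the correct form of Clarkson's inequality in $l_p$ (the inequality valid for $p \geq 2$, which degenerates into the parallelogram identity when $p = 2$) is precisely what closes this gap; the rest of the proof is bookkeeping with the two convergence hypotheses.
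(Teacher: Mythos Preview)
The paper does not supply its own proof of this lemma; it is quoted verbatim from Shukri \cite{n20} and used as a black box. Your argument is correct and is exactly the expected one: pass to a comparison triangle in $l_p$, apply Clarkson's inequality $\|\tfrac{a+b}{2}\|^p+\|\tfrac{a-b}{2}\|^p\le\tfrac12(\|a\|^p+\|b\|^p)$ with $a=\overline{\varsigma}_r-\overline{\vartheta}_r$ and $b=\overline{\varrho}_r-\overline{\vartheta}_r$, pull the resulting inequality back via the $CAT_p(0)$ comparison bound $d(m_r,\vartheta_r)\le\|\overline m_r-\overline\vartheta_r\|$, and use $m_r\in\Omega$ to bound $d(m_r,\vartheta_r)$ below by $dist(\Omega,\Delta)$. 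This is precisely the route taken in the cited source, so there is nothing to contrast.
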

 \begin{theorem}
 \cite[Theorem 3.1]{n20} Let $\Omega$ and $\Delta$ be nonempty, closed and convex subsets of  a complete $CAT_{p}(0)$ metric space $(\mathfrak{X},d)$, with $p \geq 2$. Let $\Xi: \Omega \cup \Delta \rightarrow \Omega \cup \Delta$ be a cyclic contraction mapping. Then $\Xi$ has a unique best proximity point. 
 \end{theorem}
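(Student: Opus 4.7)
The plan is to fix an arbitrary $\varsigma_0 \in \Omega$ and study the Picard orbit $\varsigma_{n+1} = \Xi \varsigma_n$, which alternates between $\Omega$ and $\Delta$. Since consecutive iterates always lie in different sets, condition (ii) of the cyclic contraction applies to $(\varsigma_{n-1},\varsigma_n)$ and an induction yields $d(\varsigma_n,\varsigma_{n+1}) \leq \eta^n d(\varsigma_0,\varsigma_1) + (1-\eta^n)\, dist(\Omega,\Delta)$, so $d(\varsigma_n,\varsigma_{n+1}) \to dist(\Omega,\Delta)$. A parallel recursive argument separating even and odd iterates, writing $A_n = d(\varsigma_0,\varsigma_{2n})$ and $B_n = d(\varsigma_0,\varsigma_{2n+1})$ and mutually bounding each via the other using (ii), shows that $\{d(\varsigma_0,\varsigma_k)\}$ is bounded.

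For $m > n$ of opposite parity, iterating (ii) exactly $n$ times — valid at each step because the index pair $(n-j,m-j)$ keeps its opposite parity — yields $d(\varsigma_n,\varsigma_m) \leq \eta^n d(\varsigma_0,\varsigma_{m-n}) + (1-\eta^n)\,dist(\Omega,\Delta)$. Combining the boundedness of the orbit with $\eta^n \to 0$ gives, for every $\epsilon > 0$, the estimate $d(\varsigma_{2s},\varsigma_{2r+1}) \leq dist(\Omega,\Delta) + \epsilon$ for all $s > r$ with $r$ sufficiently large. Taking $\varsigma_r = \varrho_r = \varsigma_{2r}$ in $\Omega$ and $\vartheta_r = \varsigma_{2r+1}$ in $\Delta$, both hypotheses of Lemma~\ref{lemma 2.2} are satisfied, and its conclusion is precisely that $\{\varsigma_{2n}\}$ is Cauchy. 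Completeness of $\mathfrak{X}$ and closedness of $\Omega$ then furnish a limit $\varrho \in \Omega$.

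To identify $\varrho$ as a best proximity point, one step of the contraction gives $d(\Xi\varrho,\varsigma_{2n}) = d(\Xi\varrho,\Xi\varsigma_{2n-1}) \leq \eta\, d(\varrho,\varsigma_{2n-1}) + (1-\eta)\, dist(\Omega,\Delta)$. The triangle inequality $d(\varrho,\varsigma_{2n-1}) \leq d(\varrho,\varsigma_{2n}) + d(\varsigma_{2n},\varsigma_{2n-1})$ combined with the two preceding limits forces $d(\varrho,\varsigma_{2n-1}) \to dist(\Omega,\Delta)$, hence $d(\Xi\varrho,\varsigma_{2n}) \to dist(\Omega,\Delta)$. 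Since $\varsigma_{2n} \to \varrho$, continuity of the metric yields $d(\varrho,\Xi\varrho) = dist(\Omega,\Delta)$.

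For uniqueness, I will first show that any best proximity point $\varrho$ satisfies $\Xi^2\varrho = \varrho$: the contraction gives $d(\Xi^2\varrho,\Xi\varrho) \leq \eta\, d(\Xi\varrho,\varrho) + (1-\eta)\, dist(\Omega,\Delta) = dist(\Omega,\Delta)$, so Lemma~\ref{lemma 2.3} applied to the constant sequences $\varrho$, $\Xi^2\varrho$ in $\Omega$ and $\Xi\varrho$ in $\Delta$ forces $\varrho = \Xi^2\varrho$. Given a second best proximity point $\varrho'$, the identities $\varrho = \Xi^2\varrho$ and $\varrho' = \Xi^2\varrho'$ feed into (ii) to produce the coupled system $a \leq \eta b + (1-\eta)\, dist(\Omega,\Delta)$ and $b \leq \eta a + (1-\eta)\, dist(\Omega,\Delta)$ with $a = d(\varrho,\Xi\varrho')$ and $b = d(\varrho',\Xi\varrho)$; elimination forces $a = b = dist(\Omega,\Delta)$. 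A final application of Lemma~\ref{lemma 2.3} with constant sequences $\varrho$, $\varrho'$ in $\Omega$ and $\Xi\varrho$ in $\Delta$ concludes $\varrho = \varrho'$. I expect the main technical obstacle to be the boundedness step, because condition (ii) only constrains pairs of points lying in different sets and so does not directly control $d(\varsigma_0,\varsigma_{2n})$, necessitating the mutual recursion between $A_n$ and $B_n$.
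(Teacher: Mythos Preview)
Your argument is correct. Note, however, that the paper does not itself prove this statement: it is quoted from Shukri~\cite{n20} as a preliminary result, so there is no proof in the present paper to compare against directly. The natural point of comparison is the paper's proof of Theorem~\ref{theorem 2.6}, which generalizes this theorem to cyclic \emph{orbital} contractions and follows exactly the same skeleton you outline: Proposition~\ref{proposition 2.4} for $d(\varsigma_n,\varsigma_{n+1})\to dist(\Omega,\Delta)$, then the iterated contraction estimate together with Lemma~\ref{lemma 2.2} to obtain that $\{\varsigma_{2n}\}$ is Cauchy, then identification of the limit as a best proximity point, and finally uniqueness via $\Xi^2$-fixed points and Lemma~\ref{lemma 2.3}.

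The one substantive difference is the boundedness step you single out. In the paper's orbital setting, bounded orbits are \emph{assumed} as part of Definition~2.1(i), so the issue never arises; in your setting of a plain cyclic contraction you must derive it, and your mutual recursion between $A_n=d(\varsigma_0,\varsigma_{2n})$ and $B_n=d(\varsigma_0,\varsigma_{2n+1})$ (via $B_n\le d(\varsigma_0,\varsigma_1)+\eta A_n+(1-\eta)dist(\Omega,\Delta)$ and the symmetric bound) does the job. This extra work is precisely what is saved by building boundedness into the definition in the orbital version, and conversely your argument shows that for genuine cyclic contractions the hypothesis is redundant.
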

In this paper, we introduce the concept of cyclic orbital contraction mapping which is a generalization of the concept of cyclic contraction mapping. This type of contraction mapping enables us to consider distance between all the points in $\mathcal{O}_{\Xi}(\varsigma,\vartheta)$ such that one point lies in the set $\Omega$ and the other point lies in the set $\Delta$ and hence, aligns with the idea of cyclic mappings in best proximity point theory. We explore the existence of best proximity points of such mappings in the framework of $CAT_p(0)$ metric spaces. Moreover, in the case of a uniformly convex Banach space, we obtain the existence of a best proximity point.
\section{Main Results}
In this section, we define the concept of cyclic orbital contraction mappings and explore the best proximity point results of these mappings in the framework of $CAT_p(0)$ metric spaces and uniformly convex Banach spaces. We begin by stating the following definition: 
\begin{definition}
\emph{Let $\Omega$ and $\Delta$ be nonempty subsets of a metric space $(\mathfrak{X},d)$. A mapping $\Xi: \Omega \cup \Delta \rightarrow \Omega \cup \Delta$ is a cyclic orbital contraction mapping if for all $\varsigma \in \Omega$ and $\vartheta \in \Delta$,  we have}
    
    \emph{(i) $\Xi$ induces bounded orbits,}
    
    \emph{(ii) $ \Xi(\Omega) \subset \Delta$ and $\Xi(\Delta) \subset \Omega$,}
    
   \emph{(iii) $d(\Xi\varsigma,\Xi\vartheta) \leq \eta 
\sup\limits_{\substack{
i - j { \thinspace odd} \\
k - l {\thinspace odd} \\
p - q {\thinspace even}
}}
\{ d(\Xi^i\varsigma,\Xi^j\varsigma), d(\Xi^k\vartheta,\Xi^l\vartheta), d(\Xi^p\varsigma,\Xi^q\vartheta) \} + (1- \eta) dist(\Omega,\Delta)$ for some }

\emph{\hspace{0.75cm}$\eta \in (0,1)$.}
\end{definition}
The following example illustrates that there exists a mapping which is cyclic orbital contraction mapping but not cyclic contraction mapping:
\begin{example}\label{example2.2}
\emph{Let $\mathfrak{X}=\mathbb{R}^2$ be the Euclidean space. Let
\begin{align*}
\Omega&= \Big \{ (-1,-a): a \in \Big[-\frac{1}{2},\frac{1}{2}\Big] \Big \},\\
\Delta&= \Big \{ (1,-b): b \in \Big[-\frac{1}{2},\frac{1}{2}\Big] \Big \}.
\end{align*}  Then $dist(\Omega,\Delta)=2$. Define $\Xi:\Omega \cup \Delta \rightarrow \Omega \cup \Delta$ by
\begin{align*}
\Xi x=\Xi(x_1,x_2)=\left\{\begin{array}{lll}
\big(x_1,-\frac{x_2}{2} \big)+(2,0) & \textnormal{if} & x\in \Omega, \\
\big(x_1,-\frac{x_2}{3} \big)-(2,0) & \textnormal{if} & x \in \Delta. 
\end{array} \right.
\end{align*}
Observe that if $(-1,-a) \in \Omega$, then $\Xi(-1,-a)=(1,\frac{a}{2}) \in \Delta$ and if $(1,-b) \in \Delta$, then $\Xi(1,-b)=\\(-1,\frac{b}{3}) \in \Omega$. We see that there does not exists $\eta \in (0,1)$ such that  $$d\Big(\Xi\Big(-1,\frac{-1}{2}\Big),\Xi\Big(1,\frac{-1}{2}\Big)\Big)=\sqrt{4+\frac{1}{144}} \leq 2 \eta+(1-\eta)dist(\Omega,\Delta).$$ Therefore, $\Xi$ is not a cyclic contraction mapping. We see that $d(\Xi(-1,-a),\Xi(1,-b)) \leq 0.95 \max\{\\d((-1,-a),\Xi(-1,-a)),d((1,-b),\Xi(1,-b))\}+(1-0.95)dist(\Omega,\Delta)$  which gives 
\begin{align*}
d(\Xi(-1,-a),\Xi(1,-b)) & \leq 0.95 \sup\limits_{\substack{
i - j { \thinspace odd} \\
k - l {\thinspace odd} \\
p - q {\thinspace even}
}}
\{\{d(\Xi^i(-1,-a),\Xi^j(-1,-a)),d(\Xi^k(1,-b),\Xi^l(1,-b)),\\
& \thinspace \thinspace \quad d(\Xi^p(-1,-a),\Xi^q(1,-b))\}+(1-0.95)dist(\Omega,\Delta).
\end{align*}  
Therefore, $\Xi$ is a cyclic orbital  contraction mapping.}
   \end{example}
   The following result serves as a key tool in deriving the subsequent results:  
\begin{proposition}\label{proposition 2.4}
Let $\Omega$ and $\Delta$ be nonempty subsets of a $CAT_{p}(0)$ metric space $(\mathfrak{X},d)$, with $p \geq 2$. Let $ \Xi:\Omega \cup \Delta \rightarrow \Omega \cup \Delta$ be a cyclic orbital contraction mapping. Let $\varsigma_0 \in \Omega$. Define $\varsigma_{n+1}=\Xi\varsigma_{n}$ for each $ n \in \mathbb{N} \cup \{0\}$. Then $d(\varsigma_n,\varsigma_{n+1}) \rightarrow dist(\Omega,\Delta)$. 
\end{proposition}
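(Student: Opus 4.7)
The plan is to bound the single-step distances $D_n = d(\varsigma_n, \varsigma_{n+1})$ by a quantity that decays geometrically down to $dist(\Omega, \Delta)$. Since $\varsigma_0 \in \Omega$, cyclicity forces $\varsigma_n \in \Omega$ for even $n$ and $\varsigma_n \in \Delta$ for odd $n$. In particular, $d(\varsigma_i,\varsigma_j) \ge dist(\Omega,\Delta)$ whenever $|i-j|$ is odd, which is the parity pattern appearing in all three terms of the cyclic orbital contraction sup.

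For each $n \ge 0$, I would introduce the auxiliary quantity
\[
T_n \;=\; \sup\bigl\{\, d(\varsigma_i,\varsigma_j) \,:\, i,j \ge n,\ |i-j|\ \text{odd}\,\bigr\}.
\]
Boundedness of orbits (condition (i) in the definition of cyclic orbital contraction) gives $T_0 < \infty$, and by construction $\{T_n\}$ is non-increasing and bounded below by $dist(\Omega,\Delta)$. The goal reduces to proving the one-step recursion
\[
T_n \;\le\; \eta\, T_{n-1} \;+\; (1-\eta)\, dist(\Omega,\Delta) \qquad (n \ge 1),
\]
from which $T_n - dist(\Omega,\Delta) \le \eta^n\bigl(T_0 - dist(\Omega,\Delta)\bigr) \to 0$, and then $dist(\Omega,\Delta) \le D_n \le T_n$ closes the argument.

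To obtain the recursion, fix $a,b \ge n$ with $|a-b|$ odd, say $a \le b$. Then $\varsigma_{a-1}$ and $\varsigma_{b-1}$ lie in opposite members of $\{\Omega,\Delta\}$, so (after relabelling if needed) the cyclic orbital contraction applies with $\varsigma = \varsigma_{a-1}$ and $\vartheta = \varsigma_{b-1}$, giving
\[
d(\varsigma_a,\varsigma_b) \;\le\; \eta \sup\bigl\{ d(\varsigma_{a-1+i},\varsigma_{a-1+j}),\ d(\varsigma_{b-1+k},\varsigma_{b-1+l}),\ d(\varsigma_{a-1+p},\varsigma_{b-1+q}) \bigr\} + (1-\eta)\,dist(\Omega,\Delta),
\]
where $i-j$, $k-l$ are odd and $p-q$ is even. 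The main technical step — and where I expect the argument to feel the most delicate — is verifying that every pair of indices appearing inside this sup has both components $\ge a-1 \ge n-1$ and odd difference. The first two kinds are immediate. For the third, $(a-1+p) - (b-1+q) = (a-b) + (p-q)$ is the sum of an odd and an even integer, hence odd. Therefore the whole sup is dominated by $T_{a-1} \le T_{n-1}$, yielding $d(\varsigma_a,\varsigma_b) \le \eta T_{n-1} + (1-\eta)\,dist(\Omega,\Delta)$; taking the supremum over admissible $a,b$ produces the claimed recursion and completes the proof.
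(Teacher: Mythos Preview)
Your argument is correct and follows essentially the same route as the paper: both iterate the cyclic orbital contraction so that each application contributes a factor $\eta$ to the orbital supremum and pushes the remainder toward $dist(\Omega,\Delta)$. Your packaging via the tail quantity $T_n$ and the one-step recursion $T_n \le \eta\, T_{n-1} + (1-\eta)\,dist(\Omega,\Delta)$ is simply a cleaner restatement of the paper's inductive bound $d(\varsigma_n,\varsigma_{n+1}) \le \eta^n T_0 + (1-\eta^n)\,dist(\Omega,\Delta)$.
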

\begin{proof}
    Consider
    \begin{align*}
       d(\varsigma_1,\varsigma_2) & = d(\Xi\varsigma_0,\Xi\varsigma_1) \\
        & \leq \eta  
        \sup_{\substack{
i - j {\thinspace odd} \\
k - l {\thinspace odd} \\
p - q {\thinspace even}
}} \{d(\Xi^i\varsigma_0,\Xi^j\varsigma_0), d(\Xi^k\varsigma_1,\Xi^l \varsigma_1), d(\Xi^p\varsigma_0,\Xi^q \varsigma_1)\} + (1- \eta) dist (\Omega,\Delta).
    \end{align*}
    Consider
\begin{align*}
   d(\varsigma_2,\varsigma_3) & = d(\Xi\varsigma_1,\Xi\varsigma_2)\\
    & \leq \eta 
    \sup_{\substack{
a - b {\thinspace odd} \\
c - d {\thinspace odd} \\
e - f {\thinspace even}
}} \{d(\Xi^a\varsigma_1,\Xi^b\varsigma_1), d(\Xi^c\varsigma_2,\Xi^d \varsigma_2), d(\Xi^e \varsigma_1,\Xi^f \varsigma_2) \} + (1- \eta) dist (\Omega,\Delta).
\end{align*}
We see that 
\begin{align*}
d(\Xi^a\varsigma_1,\Xi^b\varsigma_1) & =d(\Xi(\Xi^a\varsigma_0),\Xi(\Xi^b\varsigma_0))\\
& \leq \eta \sup_{\substack{
i - j {\thinspace odd} \\
k - l {\thinspace odd} \\
p - q { \thinspace even}
}} \{d(\Xi^i(\Xi^a\varsigma_0),\Xi^j(\Xi^a\varsigma_0)),d(\Xi^k(\Xi^b\varsigma_0),\Xi^l(\Xi^b\varsigma_0)),d(\Xi^p(\Xi^a\varsigma_0),\Xi^q(\Xi^b\varsigma_0))\}\\
& \quad +(1-\eta)dist(\Omega,\Delta).
\end{align*}
This implies that $d(\Xi^a\varsigma_1,\Xi^b\varsigma_1) \leq \eta \sup\limits_{\substack{
i - j {\thinspace odd}}} \{d(\Xi^i\varsigma_0,\Xi^j\varsigma_0)\}+(1-\eta)dist(\Omega,\Delta)$. Similarly, we show that $$d(\Xi^c\varsigma_2,\Xi^d\varsigma_2) \leq \eta \sup\limits_{\substack{
k - l {\thinspace odd}}} \{d(\Xi^k\varsigma_1,\Xi^l\varsigma_1)\}+(1-\eta)dist(\Omega,\Delta)$$ and $$d(\Xi^e\varsigma_1,\Xi^f\varsigma_2) \leq \eta \sup_{\substack{
i - j {\thinspace odd} \\
k - l {\thinspace odd} \\
p - q { \thinspace even}
}} \{d(\Xi^i\varsigma_0,\Xi^j\varsigma_0),d(\Xi^k\varsigma_1,\Xi^l\varsigma_1),d(\Xi^p\varsigma_0,\Xi^q\varsigma_1)\}+(1-\eta)dist(\Omega,\Delta).$$
This gives 
\begin{align*}
    d(\varsigma_2,\varsigma_3) & \leq \eta\Big[\eta 
    \sup_{\substack{
i - j {\thinspace odd} \\
k - l {\thinspace odd} \\
p - q { \thinspace even}
}} \{d(\Xi^i \varsigma_0,T^j \varsigma_0), d(\Xi^k \varsigma_1,\Xi^l \varsigma_1), d(\Xi^p \varsigma_0,\Xi^q \varsigma_1)\}\\
& \quad +(1-\eta)dist (\Omega,\Delta)\Big] + (1- \eta) dist (\Omega,\Delta)
\end{align*}
which implies that
\begin{align*}
    d(\varsigma_2,\varsigma_3) & \leq \eta^2  
    \sup_{\substack{
i - j {\thinspace odd} \\
k - l {\thinspace odd} \\
p - q { \thinspace even}
}} \{d(\Xi^i \varsigma_0,T^j \varsigma_0), d(\Xi^k \varsigma_1,\Xi^l \varsigma_1), d(\Xi^p \varsigma_0,\Xi^q \varsigma_1)\} + (1- \eta^2) dist (\Omega,\Delta).
\end{align*}
Similarly, we can prove 
$$ d(\varsigma_3,\varsigma_4) \leq \eta^3 
\sup_{\substack{
i - j {\thinspace odd} \\
k - l {\thinspace odd} \\
p - q {\thinspace even}}} \{d(\Xi^i \varsigma_0,\Xi^j \varsigma_0), d(\Xi^k \varsigma_1,\Xi^l \varsigma_1), d(\Xi^p \varsigma_0,\Xi^q \varsigma_1)\} + (1- \eta^3) dist (\Omega,\Delta).$$
Proceeding on the same lines, we obtain
$$d(\varsigma_n, \varsigma_{n+1}) \leq \eta^n 
\sup_{\substack{
i - j {\thinspace odd} \\
k - l {\thinspace odd} \\
p - q {\thinspace even}
}}\{d(\Xi^i \varsigma_0,\Xi^j \varsigma_0), d(\Xi^k \varsigma_1,\Xi^l \varsigma_1), d(\Xi^p \varsigma_0,\Xi^q \varsigma_1) \} + (1- \eta^n) dist (\Omega,\Delta).$$
Letting $n \rightarrow \infty$, we get $d(\varsigma_n, \varsigma_{n+1}) \rightarrow dist(\Omega,\Delta)$.
\end{proof}
We now proceed with the formulation of the first best proximity point result. 
  \begin{theorem}\label{theorem 2.6}
    Let $\Omega$ and $\Delta$ be nonempty, closed and convex subsets of a complete $CAT_{p}(0)$ metric space $(\mathfrak{X},d)$, with $p \geq 2$. Let $\Xi:\Omega \cup \Delta \rightarrow \Omega \cup \Delta$ be a cyclic orbital contraction mapping. Then
    
    (i) $\Xi$ has a best proximity point $\varsigma^* \in \Omega$,
    
    (ii) $\varsigma^*$ is the unique fixed point of $\Xi^2$,
    
    (iii) $\{\Xi^{2n}\varsigma\}$ converges to $\varsigma^*$ for every $\varsigma \in \Omega$,
    
    (iv) $\Xi\varsigma^*$ is a best proximity point of $\Xi$ in $\Delta$ and $\{\Xi^{2n}\vartheta \}  $ converges to $\Xi\varsigma^*$ for every $\vartheta \in B$.
\end{theorem}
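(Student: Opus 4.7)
Fix $\varsigma_0 \in \Omega$ and form the Picard iterates $\varsigma_{n+1} := \Xi\varsigma_n$; cyclicity gives $\varsigma_{2n}\in\Omega$, $\varsigma_{2n+1}\in\Delta$, and Proposition \ref{proposition 2.4} yields $d(\varsigma_n,\varsigma_{n+1}) \to dist(\Omega,\Delta)$. I first extract from that proof a sharper orbital-shrinkage estimate: rerunning the telescoping computation on the pair $(\varsigma_m,\varsigma_{m+1})$ and taking suprema gives
\[
E_{N} := \sup\{d(\varsigma_s,\varsigma_t) : s,t\ge N,\ s-t \text{ odd}\} \le \eta E_{N-1}+(1-\eta)dist(\Omega,\Delta),
\]
so that $E_N \le \eta^{N}\Lambda + (1-\eta^{N})dist(\Omega,\Delta)$, where $\Lambda := E_0$ is finite by the bounded-orbits hypothesis. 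Feeding the sequences $\varsigma_{2s},\varsigma_{2r}\in\Omega$ and $\varsigma_{2r+1}\in\Delta$ into Lemma \ref{lemma 2.2} (condition (i) from the shrinkage, condition (ii) from Proposition \ref{proposition 2.4}) shows $\{\varsigma_{2n}\}$ is Cauchy, so $\varsigma^* := \lim_n\varsigma_{2n}\in\Omega$. A triangle-inequality argument using $d(\varsigma_{2n},\varsigma_{2n+1})\to dist(\Omega,\Delta)$ also gives $d(\varsigma^*,\varsigma_{2n+1})\to dist(\Omega,\Delta)$.

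The main technical hurdle is part (i), $d(\varsigma^*,\Xi\varsigma^*) = dist(\Omega,\Delta)$. I would bound $d(\varsigma_{2n},\Xi\varsigma^*) = d(\Xi\varsigma_{2n-1},\Xi\varsigma^*)$ by applying the cyclic orbital contraction to the admissible pair $(\varsigma^*,\varsigma_{2n-1}) \in \Omega \times \Delta$. The resulting supremum has three pieces: (a) the $\varsigma_{2n-1}$-orbit sup, bounded by $E_{2n-1} \to dist(\Omega,\Delta)$; (b) the $\varsigma^*$-orbit sup, finite but not \emph{a priori} small; and (c) the mixed sup $\sup\{d(\Xi^p\varsigma^*,\Xi^q\varsigma_{2n-1}) : p-q\text{ even}\}$, equally finite but uncontrolled. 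The hard point is (b) and (c): iterating the contraction $K$ times multiplies the unresolved orbit contributions by $\eta^K$ while each iteration pushes the $\varsigma_{2n-1}$-orbit tail further into the shrinkage regime, yielding
\[
\limsup_{n\to\infty}d(\varsigma_{2n},\Xi\varsigma^*)\le \eta^K D + (1-\eta^K)dist(\Omega,\Delta),
\]
where $D$ uniformly bounds the $\varsigma^*$-orbit diameter. Letting $K\to\infty$ and using $d(\varsigma_{2n},\Xi\varsigma^*)\ge dist(\Omega,\Delta)$ forces $d(\varsigma_{2n},\Xi\varsigma^*)\to dist(\Omega,\Delta)$; continuity of the metric and $\varsigma_{2n}\to\varsigma^*$ then give (i).

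For (ii), the same iterative scheme applied to $(\varsigma^*,\Xi\varsigma^*)$ gives $d(\Xi\varsigma^*,\Xi^2\varsigma^*) = dist(\Omega,\Delta)$, and Lemma \ref{lemma 2.3} with the constant sequences $\Xi^2\varsigma^*,\varsigma^*\in\Omega$ and $\Xi\varsigma^*\in\Delta$ forces $\Xi^2\varsigma^* = \varsigma^*$. For uniqueness, if $\omega\in\Omega$ also satisfies $\Xi^2\omega = \omega$ then $\{\omega,\Xi\omega\}$ and $\{\varsigma^*,\Xi\varsigma^*\}$ are two-point orbits, so the supremums in the cyclic orbital contraction collapse to a handful of explicit distances; two chained applications of the contraction (absorbing $d(\omega,\Xi\omega) = dist(\Omega,\Delta) = d(\varsigma^*,\Xi\varsigma^*)$) yield $d(\omega,\varsigma^*)\le \eta^2 d(\omega,\varsigma^*)+(1-\eta^2)dist(\Omega,\Delta)$, hence $d(\omega,\varsigma^*) \le dist(\Omega,\Delta)$, and Lemma \ref{lemma 2.3} concludes $\omega = \varsigma^*$. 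Running the argument from an arbitrary $\varsigma \in \Omega$ in place of $\varsigma_0$ produces a Cauchy $\{\Xi^{2n}\varsigma\}$ whose limit is a fixed point of $\Xi^2$ in $\Omega$, hence equals $\varsigma^*$ by uniqueness, proving (iii). For (iv), the symmetric argument from $\vartheta\in\Delta$ (using convexity of $\Delta$) produces $\vartheta^* \in \Delta$ as the limit of $\{\Xi^{2n}\vartheta\}$; Lemma \ref{lemma 2.3} with $\Omega$ and $\Delta$ swapped, applied to $\vartheta^*,\Xi\varsigma^*\in\Delta$ and $\varsigma^*\in\Omega$, identifies $\vartheta^* = \Xi\varsigma^*$.
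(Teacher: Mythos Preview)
Your Cauchy argument via the tail quantities $E_N$ is correct and is in fact a clean repackaging of the paper's estimate on $d(\varsigma_{2m},\varsigma_{2n+1})$; the use of Lemma~\ref{lemma 2.2} to conclude Cauchyness is the same. The serious gap is in your treatment of part~(i).

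When you apply the contraction to the pair $(\varsigma^*,\varsigma_{2n-1})$, piece~(b) of the resulting supremum is $A:=\sup_{i-j\ \mathrm{odd}} d(\Xi^i\varsigma^*,\Xi^j\varsigma^*)$, and this supremum contains the terms $d(\varsigma^*,\Xi^j\varsigma^*)$ with $i=0$. Your iteration scheme requires rewriting each term of the supremum as $d(\Xi u,\Xi v)$ for admissible $u,v$ and reapplying the contraction to gain a factor of $\eta$. But at this point $\varsigma^*$ is known only as $\lim_n\varsigma_{2n}$; it is not of the form $\Xi(\cdot)$, so $d(\varsigma^*,\Xi^j\varsigma^*)$ cannot be peeled. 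However many times you iterate on the terms with both indices $\ge 1$, the $i=0$ row of $A$ survives unchanged, and your displayed bound $\limsup_n d(\varsigma_{2n},\Xi\varsigma^*)\le \eta^K D+(1-\eta^K)dist(\Omega,\Delta)$ does not follow. The paper meets exactly this obstruction: it first proves $d(\varsigma^*,\Xi^{2n+1}\varsigma^*)\to dist(\Omega,\Delta)$ by writing $\Xi^{2n+1}\varsigma^*=\Xi^{2n}(\Xi\varsigma^*)$ and comparing with $\varsigma_{2n}=\Xi^{2n}\varsigma_0$ (here a high power of $\Xi$ \emph{is} available on both sides, so the iteration legitimately produces $\eta^{2n}$), and only then runs a contradiction argument showing that $A$ coincides with $\sup_{j\ \mathrm{odd}} d(\varsigma^*,\Xi^j\varsigma^*)$ and hence equals $dist(\Omega,\Delta)$. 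Without an argument of that shape, piece~(b) blocks your route to $d(\varsigma^*,\Xi\varsigma^*)=dist(\Omega,\Delta)$.

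A smaller but related problem appears in the uniqueness part of~(ii). The inequality $d(\omega,\varsigma^*)\le \eta^2 d(\omega,\varsigma^*)+(1-\eta^2)dist(\Omega,\Delta)$ cannot be obtained by two chained applications of the contraction, because $\omega,\varsigma^*\in\Omega$ and $\Xi\omega,\Xi\varsigma^*\in\Delta$, so neither $d(\omega,\varsigma^*)$ nor $d(\Xi\omega,\Xi\varsigma^*)$ is of the admissible form $d(\Xi\varsigma,\Xi\vartheta)$ with $\varsigma\in\Omega$, $\vartheta\in\Delta$. You also invoke $d(\omega,\Xi\omega)=dist(\Omega,\Delta)$ without deriving it from $\Xi^2\omega=\omega$. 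The paper instead works with the cross-distances $d(\varsigma^*,\Xi\varrho)$ and $d(\Xi\varsigma^*,\varrho)$, shows the larger of the two equals $dist(\Omega,\Delta)$, and then applies Lemma~\ref{lemma 2.3}.
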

 \begin{proof}
     Let $\varsigma_0 \in \Omega$. Define $\varsigma_{n+1}= \Xi\varsigma_{n}$ for each $n \in \mathbb{N} \cup \{0\}$. Using  Proposition \ref{proposition 2.4}, we get $d(\varsigma_{2n}, \varsigma_{2n+1}) \rightarrow dist (\Omega,\Delta)$. Similarly, $d(\varsigma_{2n+1},\varsigma_{2n+2}) \rightarrow dist (\Omega,\Delta)$. Using Lemma \ref{lemma 2.3}, we infer that $d(\varsigma_{2n},\varsigma_{2n+2}) \rightarrow 0$. Also, by Proposition \ref{proposition 2.4}, $d(\varsigma_{2n+1},\varsigma_{2n+2}) \rightarrow dist (\Omega,\Delta)$ and $d(\varsigma_{2n+2},\varsigma_{2n+3}) \rightarrow dist(\Omega,\Delta)$. Using Lemma \ref{lemma 2.3}, we infer that $d(\varsigma_{2n+1},\varsigma_{2n+3}) \rightarrow 0$. 
    Now, we want to prove that $\{\varsigma_{2n}\}$ is a Cauchy sequence in $\Omega$. For this, we prove that for every $ \epsilon>0$, there exists $\tilde{\mathcal{N}}$ such that for all $ m>n \geq \tilde{\mathcal{N}}$, $d(\varsigma_{2m},\varsigma_{2n+1}) \leq dist (\Omega,\Delta)+ \epsilon$. 
    
    For $m>n$, consider
     \begin{align}\label{equation 1.1}
    d(\varsigma_{2m},\varsigma_{2n+1}) & = d(\varsigma_{2m-2n+2n},  \varsigma_{2n+1}) \nonumber \\
     &=d(\Xi^{2n}\varsigma_{2m-2n},\Xi^{2n}\varsigma_1)\nonumber \\
      &\leq \eta^{2n} 
      \sup_{\substack{
i - j {\thinspace odd} \\
k - l {\thinspace odd} \\
p - q {\thinspace even}
}}\{d(\Xi^{i}\varsigma_{2m-2n},\Xi^{j}\varsigma_{2m-2n}),d(\Xi^{k}\varsigma_1,\Xi^{l}\varsigma_1), d(\Xi^{p}\varsigma_{2m-2n},\Xi^{q}\varsigma_1) \} \nonumber \\
&\quad + (1-\eta^{2n}) dist (A,B).
 \end{align}
 As $\eta^{2n} \rightarrow 0$ as $n \rightarrow \infty$ and $\Xi$ induces bounded orbits, there exists $\mathcal{N}_0 \in \mathbb{N}$ such that for all $n \geq \mathcal{N}_0$, we have
 $$ \eta^{2n}
 \sup_{\substack{
i - j {\thinspace odd} \\
k - l {\thinspace odd} \\
p - q {\thinspace even}
}}\{d(\Xi^{i}\varsigma_{2m-2n},\Xi^{j}\varsigma_{2m-2n}),d(\Xi^{k}\varsigma_1,\Xi^{l}\varsigma_1), d(\Xi^{p}\varsigma_{2m-2n},\Xi^{q}\varsigma_1)\} < \frac{\epsilon}{2}. $$ 
 As $\eta^{2n} \rightarrow 0$ as $n \rightarrow \infty$, 
 $(1-\eta^{2n}) dist (\Omega,\Delta) \rightarrow dist(\Omega,\Delta)$.  Therefore, there exists $\mathcal{N}_1 \in \mathbb{N}$ such that $(1-\eta^{2n}) dist (\Omega,\Delta) < \frac{\epsilon}{2}+ dist(\Omega,\Delta)$ for all $n \geq \mathcal{N}_1.$ Let $\tilde{\mathcal{N}}=\max \{\mathcal{N}_0,\mathcal{N}_1 \}$. Therefore, using (\ref{equation 1.1}), for all $ m>n \geq \tilde{\mathcal{N}}$, we get
$$d(\varsigma_{2m},\varsigma_{2n+1}) < \frac{\epsilon}{2}+ \frac{\epsilon}{2} +dist(\Omega,\Delta).$$
This implies that $d(\varsigma_{2m},\varsigma_{2n+1})<\epsilon+ dist(\Omega,\Delta)$ for all $ m>n \geq \tilde{\mathcal{N}}$. Also, $d(\varsigma_{2n}, \varsigma_{2n+1}) \rightarrow dist (\Omega,\Delta)$. Therefore, using  Lemma \ref{lemma 2.2} for every $ \epsilon>0$, there exists $\hat{\mathcal{N}}$ such that for all $ m>n \geq \hat{\mathcal{N}}$, we have $d(\varsigma_{2m},\varsigma_{2n})\leq \epsilon$. This implies that $\{\varsigma_{2n}\}$ is a Cauchy sequence in $\Omega$. Since $\Omega$ is complete, there exists $\varsigma^* \in \Omega$ such that $\varsigma_{2n} \rightarrow \varsigma^*$.

(i) We claim that $d(\Xi^{2n+1}\varsigma^*,\varsigma^*) \rightarrow dist(A,B)$. As
\begin{align*}
    dist(\Omega,\Delta) & \leq d(\varsigma^*,\Xi^{2n+1}\varsigma^*)\\
    & \leq d(\varsigma^*,\varsigma_{2n}) + d(\varsigma_{2n},\Xi^{2n+1}\varsigma^*)\\
    &= d(\varsigma^*,\varsigma_{2n}) +d(\Xi^{2n}\varsigma_0,\Xi^{2n+1}\varsigma^*)\\
    & \leq d(\varsigma^*,\varsigma_{2n}) + \eta^{2n} 
    \sup_{\substack{
i - j {\thinspace odd} \\
k - l {\thinspace odd} \\
p - q {\thinspace even}
}}\{ d(\Xi^i\varsigma_0,\Xi^j\varsigma_0), d(\Xi^k(\Xi\varsigma^*),\Xi^l(\Xi\varsigma^*)), d(\Xi^p\varsigma_0,\Xi^q(\Xi\varsigma^*)) \} \\
&\quad + (1-\eta^{2n}) dist(\Omega,\Delta).
\end{align*}
Letting $ n \rightarrow \infty$, we get $d(\varsigma^*,\Xi^{2n+1}\varsigma^*) \rightarrow dist (\Omega,\Delta)$. We want to show that $ 
\sup\limits_{\substack{
i - j {\thinspace odd} \\
}}\{d(\Xi^i\varsigma^*,\Xi^j\varsigma^*)\}= dist (\Omega,\Delta)$. On the contrary, assume that $ \sup\limits_{\substack{
i - j {\thinspace odd} \\
}} \{d(\Xi^i\varsigma^*,\Xi^j\varsigma^*)\}> dist (\Omega,\Delta)$. If $r$ and $s$ are odd, consider
\begin{align*}
d(\Xi^r\varsigma^*,\Xi^{r+s}\varsigma^*) & = d( \Xi^r\varsigma^*,\Xi^r(\Xi^s\varsigma^*))\\
    & \leq \eta^r 
  \sup_{\substack{
i - j {\thinspace odd} \\
k - l {\thinspace odd} \\
p - q {\thinspace even}
}}  \{d(\Xi^i\varsigma^*,\Xi^j\varsigma^*),d(\Xi^k(\Xi^s\varsigma^*),\Xi^l(\Xi^s\varsigma^*)),d(\Xi^p\varsigma^*,\Xi^q(\Xi^s\varsigma^*))\}\\
& \quad +(1- \eta^r) dist(\Omega,\Delta)\\
& = \eta^r 
 \sup_{\substack{
i - j {\thinspace odd} \\
}}\{d(\Xi^i\varsigma^*,\Xi^j\varsigma^*)\} +(1- \eta^r) dist(\Omega,\Delta)\\
 & < \eta^r 
 \sup_{\substack{
i - j {\thinspace odd} \\
}}\{d(\Xi^i\varsigma^*,\Xi^j\varsigma^*)\} +(1- \eta^r) 
\sup_{\substack{
i - j {\thinspace odd} \\
}}\{d(\Xi^i\varsigma^*,\Xi^j\varsigma^*)\}.
\end{align*}
This implies that $d(\Xi^r\varsigma^*,\Xi^{r+s}\varsigma^*) < \sup\limits_{\substack{i - j {\thinspace odd} \\
}}\{d(\Xi^i\varsigma^*,\Xi^j\varsigma^*)\}$. If $r$ is even and $s$ is odd, consider
\begin{align*}     
d(\Xi^r\varsigma^*,\Xi^{r+s}\varsigma^*) & \leq \eta^r 
    \sup_{\substack{
i - j {\thinspace odd} \\
k - l {\thinspace odd} \\
p - q {\thinspace even}
}} \{d(\Xi^i\varsigma^*,\Xi^j\varsigma^*), d(\Xi^k(\Xi^s\varsigma^*),\Xi^l(\Xi^s\varsigma^*)),d(\Xi^p\varsigma^*,\Xi^q(\Xi^s\varsigma^*))\} \\
& \quad +(1- \eta^r) dist(\Omega,\Delta)\\
   & =  \eta^r    \sup_{\substack{
i - j {\thinspace odd} \\
}}\{d(\Xi^i\varsigma^*,\Xi^j\varsigma^*)\} +(1- \eta^r) dist(\Omega,\Delta)\\
 & < \eta^r 
  \sup_{\substack{
i - j {\thinspace odd} \\
}}\{d(\Xi^i\varsigma^*,\Xi^j\varsigma^*)\} +(1- \eta^r)
 \sup_{\substack{
i - j {\thinspace odd} \\
}}\{d(\Xi^i\varsigma^*,\Xi^j\varsigma^*)\}.
\end{align*}
 This implies that $d(\Xi^r\varsigma^*,\Xi^{r+s}\varsigma^*) < \sup\limits_{\substack{i - j {\thinspace odd} \\
}}\{d(\Xi^i\varsigma^*,\Xi^j\varsigma^*)\}$. Therefore, $ \sup\limits_{\substack{
i - j {\thinspace odd} \\
}}\{d(\Xi^i\varsigma^*,\Xi^j\varsigma^*)\}= 
 \sup\limits_{\substack{
n {\thinspace odd} \\
}}\{d(\varsigma^*,\\ \Xi^n\varsigma^*)\}$. As $d(\varsigma^*,\Xi^n\varsigma^*)\rightarrow dist(\Omega,\Delta)$, where $n$ is odd, there exists $n_0 \in \mathbb{N}$ such that 
\begin{equation*}
\begin{split}
 \sup_{\substack{
i - j {\thinspace odd} \\
}} \{d(\Xi^i\varsigma^*,\Xi^j\varsigma^*)\}&=\left\{\begin{array}{ll}
\max \{d(\varsigma^*,\Xi^r\varsigma^*):& r=1,3,\ldots, n_0  \mbox{ if } n_0 \mbox{ is odd}\},\\
\max \{d(\varsigma^*,\Xi^r\varsigma^*):& r=1,3,\ldots,n_0-1  \mbox{ if } n_0 \mbox{ is even}\}.\\
\end{array}
\right.\\\ 
\end{split}
\end{equation*}
Let $a$ be the index where the maximum is attained. 
\begin{align*}
   d(\varsigma^*,\Xi^a\varsigma^*) & \leq d(\varsigma^*,\Xi^{2n}\varsigma^*)+ d(\Xi^{2n}\varsigma^*,\Xi^a\varsigma^*)\\
    & = d(\varsigma^*,\Xi^{2n}\varsigma^*)+d(\Xi^a(\Xi^{2n-a}\varsigma^*),\Xi^a\varsigma^*)\\
    & \leq d(\varsigma^*,\Xi^{2n}\varsigma^*)+ \eta^a
     \sup_{\substack{
i - j {\thinspace odd} \\
k - l {\thinspace odd} \\
p - q {\thinspace even}
}} \{d(\Xi^i(\Xi^{2n-a}\varsigma^*),\Xi^j(\Xi^{2n-a}\varsigma^*)), d(\Xi^k\varsigma^*,\Xi^l\varsigma^* ),d(\Xi^p(\Xi^{2n-a}\varsigma^*),\Xi^q\varsigma^*)\}\\
 & \quad + (1- \eta^a) dist(\Omega,\Delta).
\end{align*}
Therefore,
\begin{equation}\label{equation 1.2}
d(\varsigma^*,\Xi^a\varsigma^*)\leq d(\varsigma^*,\Xi^{2n}\varsigma^*)+ \eta^a 
  \sup_{\substack{
i - j {\thinspace odd} \\
}} \{ d(\Xi^i\varsigma^*,\Xi^j\varsigma^*)\} +(1- \eta^a) dist (\Omega,\Delta). 
\end{equation}
Consider
\begin{align*}
 d(\Xi^{2n}\varsigma^*,\Xi^{2n+1}\varsigma^*) & \leq \eta^{2n} 
     \sup_{\substack{
i - j {\thinspace odd} \\
k - l {\thinspace odd} \\
p - q {\thinspace even}
}}\{d(\Xi^i\varsigma^*,\Xi^j\varsigma^*),d( \Xi^k(\Xi\varsigma^*),\Xi^l(\Xi\varsigma^*)),d(\Xi^p\varsigma^*,\Xi^q(\Xi\varsigma^*) )\}\\
& \quad + (1- \eta^{2n}) dist(\Omega,\Delta).
\end{align*}
This gives $ d(\Xi^{2n}\varsigma^*,\Xi^{2n+1}\varsigma^*)  \leq \eta^{2n} 
     \sup\limits_{\substack{
i - j {\thinspace odd} 
}}\{d(\Xi^i\varsigma^*,\Xi^j\varsigma^*)\}+ (1- \eta^{2n}) dist(\Omega,\Delta)$. Letting $ n \rightarrow \infty$, we get $d(\Xi^{2n}\varsigma^*,\Xi^{2n+1}\varsigma^*)\rightarrow dist(\Omega,\Delta)$. Also, $d(\varsigma^*,\Xi^{2n+1}\varsigma^*)\rightarrow dist(\Omega,\Delta)$. Therefore, by Lemma \ref{lemma 2.3}, we get $d(\varsigma^*,\Xi^{2n}\varsigma^*)\rightarrow 0$. Letting $n \rightarrow \infty$ in ($ \ref{equation 1.2}$), we get
   \begin{align*}
d(\varsigma^*,\Xi^a\varsigma^*)& \leq \eta^a 
     \sup_{\substack{
i - j {\thinspace odd} \\
}} \{d(\Xi^i\varsigma^*,\Xi^j\varsigma^*)\}+(1-\eta^a)dist(\Omega,\Delta)\\
    & < \eta^a 
     \sup_{\substack{
i - j {\thinspace odd} \\
}} \{d(\Xi^i\varsigma^*,\Xi^j\varsigma^*)\}+(1-\eta^a)
 \sup_{\substack{
i - j {\thinspace odd} \\
}} \{d(\Xi^i\varsigma^*,\Xi^j\varsigma^*)\}.
\end{align*}
Therefore, $d(\varsigma^*,\Xi^a\varsigma^*) <  \sup\limits_{\substack{
i - j {\thinspace odd}\\ 
}}  \{d(\Xi^i\varsigma^*,\Xi^j\varsigma^*)\}$ which gives
$\sup\limits_{\substack{
i - j {\thinspace odd} \\
}}  \{d(\Xi^i\varsigma^*,\Xi^j\varsigma^*)\} <   \sup\limits_{\substack{
i - j {\thinspace odd} \\
}}  \{d(\Xi^i\varsigma^*,\Xi^j\varsigma^*)\}$, a contradiction. Therefore, $\sup\limits_{\substack{
i - j {\thinspace odd} \\
}}  \{d(\Xi^i\varsigma^*,\Xi^j\varsigma^*)\}=dist(\Omega,\Delta)$. This implies $d(\varsigma^*,\Xi\varsigma^*)= dist(\Omega,\Delta)$.  Therefore $\varsigma^*$ is a best proximity point of $\Xi$.

 (ii) We need to show that $\varsigma^*$ is the unique fixed point of $\Xi^2$.   Consider 
 \begin{align*}
d(\Xi\varsigma^*,\Xi^2\varsigma^*)& \leq\eta 
     \sup_{\substack{
i - j {\thinspace odd} \\
k - l {\thinspace odd} \\
p - q {\thinspace even}
}}\{d(\Xi^i\varsigma^*,\Xi^j\varsigma^*),d(\Xi^k(\Xi\varsigma^*),\Xi^l(\Xi\varsigma^*)),d(\Xi^p\varsigma^*,\Xi^q(\Xi\varsigma^*))) \} + (1- \eta) dist(\Omega,{B}).
\end{align*}
Therefore,
\begin{align*}
   d(\Xi\varsigma^*,\Xi^2\varsigma^*)  & \leq \eta  \sup_{\substack{
i - j {\thinspace odd} \\
}} \{d(\Xi^i\varsigma^*,\Xi^j\varsigma^*)\}+(1- \eta) dist(\Omega,\Delta)\\
     & =\eta dist(\Omega,\Delta)+(1- \eta) dist(\Omega,\Delta)
 \end{align*}
which implies that $d(\Xi\varsigma^*,\Xi^2\varsigma^* )\leq dist(\Omega,\Delta)$. This gives $d(\Xi\varsigma^*,\Xi^2\varsigma^*)= dist(\Omega,\Delta)$. Also, $d(\varsigma^*,\Xi\varsigma^*)= dist(\Omega,\Delta)$. Using Lemma \ref{lemma 2.3}, we get $d(\varsigma^*,\Xi^2\varsigma^*)=0$ which gives $\varsigma^*$ is a fixed point of $\Xi^2$. Now, it remains to prove the uniqueness of fixed point of $\Xi^2$. Let $\varrho$ be the another fixed point of $\Xi^2$.  First, we show that $\varrho$ is a best proximity point of $\Xi$. As $\Xi^2 \varrho=\varrho$. This implies that $\Xi^4\varrho=\Xi^2(\Xi^2\varrho)=\Xi^2\varrho=\varrho$. Therefore, $\Xi^{2n}\varrho=\varrho$ which gives $d(\varrho,\Xi^{2n}\varrho)=0$. We claim $d(\Xi^{2n+1}\varrho,\varrho) \rightarrow dist(\Omega,\Delta)$. 
 As \begin{align*}
           dist(A,B) & \leq d(\varrho, \Xi^{2n+1}\varrho)\\
     & \leq d(\varrho,\Xi^{2n}\varrho)+d(\Xi^{2n}\varrho,\Xi^{2n+1}\varrho)\\
     &\leq d(\varrho, \Xi^{2n}\varrho) + \eta^{2n}
      \sup_{\substack{
i - j {\thinspace odd} \\
k - l {\thinspace odd} \\
p - q {\thinspace even}
}}\{d(\Xi^i\varrho,\Xi^j\varrho) ,d(\Xi^k(\Xi\varrho),\Xi^l(\Xi\varrho) ),d(\Xi^p\varrho,\Xi^q(\Xi\varrho)) \}\\& \quad +(1- \eta^{2n}) dist(\Omega,\Delta).
\end{align*}
Therefore, $dist(A,B)  \leq d(\varrho, \Xi^{2n+1}\varrho) \leq d(\varrho,\Xi^{2n}\varrho) + \eta^{2n}   \sup\limits_{\substack{
i - j {\thinspace odd} \\
}}  \{d(\Xi^i\varrho,\Xi^j\varrho)\} + (1-\eta^{2n})dist(\Omega,\Delta)$.  Letting $ n \rightarrow \infty$, we get $d(\varrho, \Xi^{2n+1}\varrho) \rightarrow dist(\Omega,\Delta)$. Proceeding as in (i), we show that $\sup\limits_{\substack{
i - j {\thinspace odd} \\
}} d(\Xi^i\varrho,\Xi^j\varrho)=dist(\Omega,\Delta)$. Therefore, $\varrho$ is a best proximity point of $\Xi$.  If we assume that $d(\Xi\varsigma^*,\varrho) \leq  d(\varsigma^*,\Xi\varrho)$, consider \begin{align*}
d(\varsigma^*,\Xi\varrho) &= d(\Xi^2\varsigma^*,\Xi\varrho)\\
  & \leq \eta
    \sup_{\substack{
i - j {\thinspace odd} \\
k - l {\thinspace odd} \\
p - q {\thinspace even}
}}\{d(\Xi^i(\Xi\varsigma^*),\Xi^j(\Xi\varsigma^*)), d(\Xi^k\varrho,\Xi^l\varrho), d(\Xi^p(\Xi\varsigma^*),\Xi^q\varrho)\}  + (1- \eta) dist (\Omega,\Delta).
 \end{align*}
We see that $d(\Xi^i(\Xi\varsigma^*),\Xi^j(\Xi\varsigma^*)) = d(\varsigma^*,\Xi\varsigma^*) =dist(\Omega,\Delta)$ if $i-j$ is odd and $d(\Xi^k\varrho,\Xi^l\varrho) =dist (\Omega,\Delta)$ if $k-l$ is odd. Also, $d(\Xi^p(\Xi\varsigma^*),\Xi^q\varrho)= d(\varsigma^*,\Xi\varrho)$ if $p$ and $q$ are odd and $d(\Xi^p(\Xi\varsigma^*),\Xi^q\varrho) = d(\Xi\varsigma^*,\varrho)$ if $p$ and $q$ are even.
Therefore,
 \begin{align*}
d(\varsigma^*,\Xi\varrho) & \leq \eta \max \{dist(\Omega,\Delta), d(\varsigma^*,\Xi\varrho), d(\Xi\varsigma^*,\varrho) \} +(1- \eta) dist(\Omega,\Delta)\\
     &\leq \eta d(\varsigma^*,\Xi\varrho) +(1- \eta) dist(\Omega,\Delta).
\end{align*}
This implies $d(\varsigma^*,\Xi\varrho) =dist(\Omega,\Delta)$. As $d(\varrho,\Xi\varrho)= dist(\Omega,\Delta)$. Using Lemma \ref{lemma 2.3}, we infer that $d(\varsigma^*,\varrho)=0$ which gives $\varsigma^*=\varrho$. If we assume $d(\varsigma^*,\Xi\varrho)\leq d(\Xi\varsigma^*,\varrho)$, then proceeding as above, we get $\varsigma^*=\varrho$. Hence, $\varsigma^*$ is the unique fixed point of $\Xi^2$.

(iii) Let $\varsigma \in \Omega$. Then $\{\Xi^{2n}\varsigma\}$ is a Cauchy sequence in $\Omega$. Since $\Omega$ is closed and $\mathfrak{X}$ is complete, $\Omega$ is complete. Therefore, there exists $\varpi \in \Omega$ such that $\Xi^{2n} \varsigma \rightarrow \varpi$, where $\varpi$ is a best proximity point of $\Xi$. To  show $ \varsigma^*=\varpi$. We see that $\Xi^{2n+1}\varsigma_0 \rightarrow \Xi\varsigma^*$. Consider
 \begin{align*}
d(\Xi^{2n}\varsigma_0,\Xi^{2n+1}\varsigma_0)& \leq  \eta^{2n}
      \sup_{\substack{
i - j {\thinspace odd} \\
k - l {\thinspace odd} \\
p - q {\thinspace even}
}}\{d(\Xi^i\varsigma_0,\Xi^j\varsigma_0), d( \Xi^k(\Xi\varsigma_0),\Xi^l(\Xi\varsigma_0)), d(\Xi^p\varsigma_0,\Xi^q(\Xi\varsigma_0)) \} \\
& \quad+(1-\eta^{2n})dist(\Omega,\Delta).
\end{align*}
Therefore, $d(\Xi^{2n}\varsigma_0,\Xi^{2n+1}\varsigma_0) \leq \eta^{2n}
 \sup\limits_{\substack{
i - j {\thinspace odd} \\
}} 
\{ d(\Xi^i\varsigma_0,\Xi^j\varsigma_0)\} + (1-\eta^{2n})dist(\Omega,\Delta)$. Letting $n \rightarrow \infty$, we get $d(\Xi^{2n}\varsigma_0,\Xi^{2n+1}\varsigma_0) \rightarrow dist(\Omega,\Delta)$.  As $d(\varsigma^*,\Xi\varsigma^*)= dist(\Omega,\Delta)$. This implies $d( \Xi^{2n}\varsigma_0,\Xi\varsigma^*) \rightarrow dist(\Omega,\Delta)$. Using Lemma \ref{lemma 2.3}, we infer that 
$d(\Xi^{2n+1}\varsigma_0,\Xi\varsigma^*)\rightarrow 0$. Consider 
\begin{align*}
d(\varpi, \Xi\varsigma^*)&= \lim_{n \rightarrow \infty} d(\Xi^{2n}\varsigma,\Xi^{2n+1}\varsigma_0) \\
    & \leq \lim_{n \rightarrow \infty} \Big[ \eta^{2n} 
    \sup_{\substack{
i - j {\thinspace odd} \\
k - l {\thinspace odd} \\
p - q {\thinspace even}
}}\{d(\Xi^i\varsigma,\Xi^j\varsigma), d(\Xi^k(\Xi\varsigma_0),\Xi^l(\Xi\varsigma_0)), d(\Xi^p\varsigma,\Xi^q(\Xi\varsigma_0))\} +(1-\eta^{2n})dist(\Omega,\Delta) \Big ].
\end{align*}
Therefore,
$$d(\varpi, \Xi\varsigma^*) \leq \lim_{n \rightarrow \infty} \Big[ \eta^{2n} \sup_{\substack{
i - j {\thinspace odd} \\
k - l {\thinspace odd} \\
p - q {\thinspace even}
}} \{d(\Xi^i\varsigma,\Xi^j\varsigma), d(\Xi^k\varsigma_0,\Xi^l\varsigma_0), d(\Xi^p\varsigma,\Xi^q(\Xi\varsigma_0)) \} +(1-\eta^{2n})dist(\Omega,\Delta) \Big ].$$
Letting $ n \rightarrow \infty $, we get $d(\varpi,\Xi\varsigma^*)=dist(\Omega,\Delta)$. Also, $d(\varsigma^*, \Xi\varsigma^*)=dist(\Omega,\Delta)$. Using Lemma \ref{lemma 2.3}, we deduce that $d(\varsigma^*,\varpi)=0$.  Hence, $\{\Xi^{2n}\varsigma \}$ converges to $\varsigma^*$ for every $\varsigma \in \Omega$.
  
  (iv) Firstly, we show that $\{ \Xi^{2n}\vartheta \}$ converges to $\Xi\varsigma^*$ for every $\vartheta \in \Delta$. We see that $\Xi\vartheta \in \Omega$. Using (iii),  we get $\Xi^{2n}(\Xi\vartheta) \rightarrow \varsigma^*$ for every $\vartheta \in \Delta$. Consider \begin{align*}
d(\Xi^{2n}\vartheta,\varsigma^*)&= \lim_{n \rightarrow \infty}  d(\Xi^{2n}\vartheta, \Xi^{2n}(\Xi\vartheta))\\
    & \leq  \lim_{n \rightarrow \infty} \Big[ \eta^{2n} \sup_{\substack{
i - j {\thinspace odd} \\
k - l {\thinspace odd} \\
p - q {\thinspace even}
}} \{d(\Xi^i\vartheta,\Xi^j\vartheta), d(\Xi^k(\Xi\vartheta),\Xi^l(\Xi\vartheta)), d(\Xi^p\vartheta,\Xi^q(\Xi\vartheta))\} +(1-\eta^{2n})dist(\Omega,\Delta) \Big].
\end{align*}
Therefore,
$d(\Xi^{2n}\vartheta,\varsigma^*)\leq \lim\limits_{n \rightarrow \infty} [ \eta^{2n} 
\sup\limits_{\substack{
i - j {\thinspace odd} \\
}}\{d(\Xi^i\vartheta,\Xi^j\vartheta)\} +(1-\eta^{2n})dist(\Omega,\Delta)]$. This implies $d(\Xi^{2n}\vartheta,\varsigma^*)\rightarrow dist(\Omega,\Delta)$. Also, $d(\varsigma^*,\Xi\varsigma^*)= dist(\Omega,\Delta)$. Using Lemma \ref{lemma 2.3}, we infer that $d(\Xi^{2n}\vartheta,\Xi\varsigma^*)\rightarrow 0$. We show that $\Xi\varsigma^*$ is a best proximity point of $\Xi$ in $\Delta$. We claim that $d(\Xi^{2n+1}(\Xi\varsigma^*),\Xi\varsigma^*) \rightarrow dist(\Omega,\Delta)$. Consider
 \begin{align*}
          dist(\Omega,\Delta) & \leq d(\Xi^{2n+1}(\Xi\varsigma^*),\Xi\varsigma^*)\\
     & \leq d(\Xi^{2n+2}\varsigma^*,\Xi^{2n}\vartheta)+d(\Xi^{2n}\vartheta,\Xi\varsigma^*)\\
     & \leq \eta^{2n} \sup_{\substack{
i - j {\thinspace odd} \\
k - l {\thinspace odd} \\
p - q {\thinspace even}
}} \{d(\Xi^i(\Xi^2\varsigma^*),\Xi^j(\Xi^2\varsigma^*)), d(\Xi^k\vartheta,\Xi^l\vartheta), d(\Xi^p(\Xi^2\varsigma^*),\Xi^q\vartheta)\}\\
& \quad  + (1-\eta^{2n})dist(\Omega,\Delta) + d(\Xi^{2n}\vartheta,\Xi\varsigma^*).
\end{align*}
Letting $n \rightarrow \infty$, we get $d(\Xi^{2n+1}(\Xi\varsigma^*),\Xi\varsigma^*) \rightarrow dist(\Omega,\Delta)$. By (i), we have $ \sup\limits_{\substack{
i - j {\thinspace odd} \\
}} \{d(\Xi^i\varsigma^*,\Xi^j\varsigma^*)\}=dist(\Omega,\Delta)$. Therefore, $d(\Xi\varsigma^*,\Xi^2\varsigma^*)=dist(\Omega,\Delta)$ which gives $\Xi\varsigma^*$ is a best proximity point of $\Xi$ in $\Delta$. Now, it remains to show that $\{\Xi^{2n}\omega\}$ converges to $\Xi\varsigma^*$ for every $\omega \in \Delta$. Let $ \omega \in \Delta$. Then $\{\Xi^{2n}\omega\}$ is a Cauchy sequence in $\Delta$. Since $\Delta$ is complete, $\Xi^{2n} \omega \rightarrow \Omega$, where $\Omega$ is a best proximity point of $\Xi$ in $\Delta$. This implies that $d(\Omega,\Xi\Omega)= dist(\Omega,\Delta)$. We show that $\Xi\varsigma^*=\Omega$. We see that $\Xi^{2n+1} \omega \rightarrow \Xi \Omega$. 
Consider
 \begin{align*}
d(\Omega,\Xi^{2n+1}\omega)& = \lim_{n \rightarrow\infty} d(\Xi^{2n}\omega,\Xi^{2n+1}\omega)\\
    & \leq \lim_{n \rightarrow\infty} \Big [ \eta^{2n} \sup_{\substack{
i - j {\thinspace odd} \\
k - l {\thinspace odd} \\
p - q {\thinspace even}
}} \{d(\Xi^i \omega,\Xi^j \omega), d(\Xi^k(\Xi \omega),\Xi^l(\Xi \omega)), d(\Xi^p \omega,\Xi^q (\Xi\omega)) \} \\
& \quad +(1-\eta^{2n})dist(\Omega,\Delta) \Big ].
\end{align*}
 Therefore, $d(\Omega,\Xi^{2n+1}\omega) \leq  \lim\limits_{n \rightarrow\infty} [ \eta^{2n} \sup\limits_{\substack{
i-j{\thinspace odd}
}} \{d(\Xi^i \omega,\Xi^j \omega) \}+ (1-\eta^{2n})dist(\Omega,\Delta)]$. Thus, $d(\Omega,\Xi^{2n+1} \omega)\rightarrow dist(\Omega,\Delta)$. As $ d(\Omega,\Xi\Omega)=dist(\Omega,\Delta)$. Using  Lemma \ref{lemma 2.3}, we deduce that $d(\Xi^{2n+1} \omega,\Xi\Omega)\rightarrow 0$.  Consider
 \begin{align*}
d(\Xi\varsigma^*,\Xi\Omega)&= \lim_{n \rightarrow\infty} d(\Xi^{2n}\vartheta,T^{2n+1}\omega)\\
     & \leq  \lim_{n \rightarrow\infty} \Big [ \eta^{2n} \sup_{\substack{
i - j {\thinspace odd} \\
k - l {\thinspace odd} \\
p - q {\thinspace even}
}} \{d(\Xi^i\vartheta,\Xi^j\vartheta), d(\Xi^k(\Xi \omega),\Xi^l(\Xi \omega)), d(\Xi^p\vartheta,\Xi^q (\Xi\omega))\} +(1-\eta^{2n})dist(\Omega,\Delta) \Big]
 \end{align*}
which implies that $d(\Xi\varsigma^*,\Xi\Omega)=dist(\Omega,\Delta)$. Also, $d(\Omega,\Xi\Omega)= dist(\Omega,\Delta)$. Using Lemma \ref{lemma 2.3}, we infer that $d(\Xi\varsigma^*,\Omega)=0$ which gives $\Xi\varsigma^*=\Omega$. 
 \end{proof}
 Suzuki et al. \cite{n22} considered cyclic mappings on $\Omega \cup \Delta$ satisfying  $$d(\Xi\varsigma,\Xi\vartheta) \leq \eta \max \{d(\varsigma,\vartheta), d(\varsigma,\Xi\varsigma), d(\vartheta,\Xi\vartheta)\} + (1- \eta) dist(\Omega,\Delta)$$ for some $\eta \in (0,1)$ and for all $\varsigma \in \Omega$ and $\vartheta \in \Delta$. The existence of best proximity points associated with these mappings \cite[Theorem 2]{n22} has been examined in the context of metric spaces under suitable conditions. We establish   best proximity point results for such contraction mappings in the framework of $CAT_p(0)$ metric spaces.
 \begin{corollary}
 Let $\Omega$ and $\Delta$ be two nonempty, closed and convex subsets of a complete $CAT_{p}(0)$ metric space $(\mathfrak{X},d)$,  with $p \geq 2$. Let $\Xi:\Omega \cup \Delta \rightarrow \Omega \cup \Delta$ be a  mapping such that for all $\varsigma \in \Omega$ and $\vartheta \in \Delta$, we have 
 
(i) $ \Xi(\Omega) \subset \Delta$ and $\Xi(\Delta) \subset \Omega$,
    
(ii) $d(\Xi\varsigma,\Xi\vartheta) \leq \eta 
\max \{d(\varsigma,\vartheta), d(\varsigma,\Xi\varsigma), d(\vartheta,\Xi\vartheta)\} + (1- \eta) dist(\Omega,\Delta)$ for some $\eta \in (0,1)$.\\
Then $\Xi$ has a best proximity point. 
 \end{corollary}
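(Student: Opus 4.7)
The plan is to show that any mapping satisfying hypotheses (i) and (ii) of the corollary is in fact a cyclic orbital contraction mapping in the sense of Definition 2.1, after which the conclusion follows immediately from Theorem \ref{theorem 2.6}.

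For the contraction inequality (iii) of Definition 2.1, the key observation is that each of the three quantities in the Suzuki-type maximum is a special case of one of the three families appearing in the orbital supremum: $d(\varsigma,\vartheta)=d(\Xi^0\varsigma,\Xi^0\vartheta)$ with $0-0$ even, $d(\varsigma,\Xi\varsigma)=d(\Xi^0\varsigma,\Xi^1\varsigma)$ with $0-1$ odd, and $d(\vartheta,\Xi\vartheta)=d(\Xi^0\vartheta,\Xi^1\vartheta)$ with $0-1$ odd. Hence the Suzuki max is dominated by the orbital supremum, so hypothesis (ii) of the corollary implies (iii) of Definition 2.1; cyclicity (ii) of the definition is exactly hypothesis (i) of the corollary.

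The substantive step is verifying bounded orbits. Fix $\varsigma_0\in\Omega$, set $\varsigma_n=\Xi^n\varsigma_0$ and $c_n=d(\varsigma_n,\varsigma_{n+1})$. Applying the Suzuki inequality to consecutive iterates and splitting on whether $c_n \leq c_{n-1}$ gives $c_n-dist(\Omega,\Delta)\leq\eta(c_{n-1}-dist(\Omega,\Delta))$, so $(c_n)$ is bounded and tends to $dist(\Omega,\Delta)$. To bound $T_n:=d(\varsigma_0,\varsigma_n)$ I argue by parity. When $n$ is even, $\varsigma_0\in\Omega$ and $\varsigma_{n-1}\in\Delta$ lie in opposite sets, so combining $T_n\leq c_0+d(\varsigma_1,\varsigma_n)$ with the Suzuki bound on $d(\Xi\varsigma_0,\Xi\varsigma_{n-1})$ yields $T_n\leq c_0+\eta\max\{T_{n-1},c_0,c_{n-1}\}+(1-\eta)dist(\Omega,\Delta)$. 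When $n$ is odd, $\varsigma_1$ and $\varsigma_n$ both lie in $\Delta$, so I insert the further intermediate point $\varsigma_2\in\Omega$, apply Suzuki to $(\varsigma_1,\varsigma_{n-1})$, and use the triangle inequality $d(\varsigma_1,\varsigma_{n-1})\leq T_{n-1}+c_0$. In either case the recursion $T_n\leq\eta T_{n-1}+C$ holds for a constant $C$ depending only on $c_0$, $c_1$, $\sup_k c_k$ and $dist(\Omega,\Delta)$, and iteration gives $T_n\leq C/(1-\eta)$ for all $n$. A symmetric argument handles $\varsigma_0\in\Delta$.

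With conditions (i)--(iii) of Definition 2.1 all in place, Theorem \ref{theorem 2.6} produces the best proximity point. The main obstacle is precisely the bounded-orbit verification: the Suzuki-type hypothesis is asymmetric, controlling only distances between points in different sets, so distances between same-set iterates must be reached indirectly through an intermediate iterate, and one must check that this indirection does not spoil the geometric recursion required for boundedness.
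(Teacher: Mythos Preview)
Your approach is exactly the one the paper intends: the corollary is stated in the paper without proof immediately after Theorem~\ref{theorem 2.6}, the implicit argument being that the Suzuki-type condition is a special case of the cyclic orbital contraction condition, so Theorem~\ref{theorem 2.6} applies. Your observation that $d(\varsigma,\vartheta)$, $d(\varsigma,\Xi\varsigma)$, $d(\vartheta,\Xi\vartheta)$ are particular instances of the three families in the orbital supremum is precisely this reduction.

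Where you go beyond the paper is in explicitly verifying condition~(i) of Definition~2.1 (bounded orbits), which the paper leaves to the reader. Your argument for this is sound: the recursion $c_n-dist(\Omega,\Delta)\le\eta(c_{n-1}-dist(\Omega,\Delta))$ bounds the step sequence, and then the parity-split triangle-inequality argument yields a recursion of the form $T_n\le \eta T_{n-1}+C$ (after absorbing the auxiliary constants into $C$ via $\max\{T_{n-1},K\}\le T_{n-1}+K$), giving $\sup_n T_n<\infty$. Since boundedness of each single orbit implies boundedness of every double orbit $\mathcal{O}_\Xi(\varsigma,\vartheta)=\mathcal{O}_\Xi(\varsigma)\cup\mathcal{O}_\Xi(\vartheta)$, this is enough for Definition~2.1(i), and the conclusion follows from Theorem~\ref{theorem 2.6} as you state.
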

We now establish the results for cyclic orbital contraction mappings in the framework of uniformly convex Banach spaces. We first state the result which is essential for establishing the existence of best proximity point. The proof is omitted as it can be obtained by arguments parallel to those in the proof of Proposition \ref{proposition 2.4}.
\begin{proposition}\label{proposition 2.5}
Let $\Omega$ and $\Delta$ be nonempty subsets of a uniformly convex Banach space $\mathfrak{X}$. Let $\Xi:\Omega \cup \Delta \rightarrow \Omega \cup \Delta$ be a cyclic orbital contraction mapping. Suppose that  $\varsigma_0 \in \Omega$. Define $\varsigma_{n+1}=\Xi\varsigma_{n}$ for each $ n \in \mathbb{N} \cup \{0\}$. Then $\Vert \varsigma_n-\varsigma_{n+1}\Vert \rightarrow dist(\Omega,\Delta)$. 
\end{proposition}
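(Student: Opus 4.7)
The plan is to transcribe the argument of Proposition~\ref{proposition 2.4} verbatim into the normed setting, replacing $d(\cdot,\cdot)$ by $\|\cdot-\cdot\|$. The uniform convexity of $\mathfrak{X}$ plays no role in this particular statement; only the cyclic orbital contraction axioms (bounded orbits and the defining inequality (iii)) together with $\eta\in(0,1)$ are invoked. The target is the explicit bound
\[
\|\varsigma_n-\varsigma_{n+1}\|\le \eta^n\, S_0 + (1-\eta^n)\,dist(\Omega,\Delta) \qquad (n\ge 1),
\]
where
\[
S_0=\sup_{\substack{i-j\text{ odd}\\ k-l\text{ odd}\\ p-q\text{ even}}}\bigl\{\|\Xi^i\varsigma_0-\Xi^j\varsigma_0\|,\ \|\Xi^k\varsigma_1-\Xi^l\varsigma_1\|,\ \|\Xi^p\varsigma_0-\Xi^q\varsigma_1\|\bigr\}.
\]
Since $\Xi$ induces bounded orbits, $S_0<\infty$, so letting $n\to\infty$ immediately yields $\|\varsigma_n-\varsigma_{n+1}\|\to dist(\Omega,\Delta)$.

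First I would handle $n=1$ by applying (iii) directly to $(\varsigma_0,\varsigma_1)$, which yields the bound with $\eta^1$ and $S_0$. For the inductive step I would write $\|\varsigma_{n+1}-\varsigma_{n+2}\|=\|\Xi\varsigma_n-\Xi\varsigma_{n+1}\|$ and apply (iii) to the pair $(\varsigma_n,\varsigma_{n+1})$. The three orbital quantities appearing inside the resulting supremum are of the form $\|\Xi^a\varsigma_n-\Xi^b\varsigma_n\|$, $\|\Xi^c\varsigma_{n+1}-\Xi^d\varsigma_{n+1}\|$, and $\|\Xi^e\varsigma_n-\Xi^f\varsigma_{n+1}\|$. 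Using $\varsigma_n=\Xi\varsigma_{n-1}$ and $\varsigma_{n+1}=\Xi\varsigma_n$, I apply (iii) a second time to each of these terms, precisely mirroring the descent step executed in Proposition~\ref{proposition 2.4}. Each such descent produces an additional factor of $\eta$, an accompanying $(1-\eta)dist(\Omega,\Delta)$ term, and rewrites the base points one index earlier; iterating until the base points are $(\varsigma_0,\varsigma_1)$ produces the factor $\eta^n$ in front of $S_0$ along with the telescoping contribution $(1+\eta+\cdots+\eta^{n-1})(1-\eta)dist(\Omega,\Delta)=(1-\eta^n)dist(\Omega,\Delta)$.

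The one point worth verifying carefully—and the only place where bookkeeping could go wrong—is that the parity constraints ``$i-j$ odd, $k-l$ odd, $p-q$ even'' are preserved under descent: when an inner supremum is re-expressed in terms of an earlier pair of base points via $\varsigma_m=\Xi\varsigma_{m-1}$, the induced uniform shift of all exponents by $+1$ does not disturb the odd/odd/even parity pattern of the differences $i-j$, $k-l$, $p-q$. This is purely combinatorial and is the same check implicit in Proposition~\ref{proposition 2.4}, so no genuinely new obstacle appears; the uniformly convex Banach space hypothesis is inherited here only because subsequent theorems in this section need it. Combining the induction bound with $\eta^n\to 0$ and the trivial lower bound $\|\varsigma_n-\varsigma_{n+1}\|\ge dist(\Omega,\Delta)$ (valid because condition (ii) forces $\varsigma_n$ and $\varsigma_{n+1}$ to lie in opposite sets) concludes the proof.
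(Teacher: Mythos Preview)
Your proposal is correct and mirrors the paper's approach exactly: the paper omits the proof of Proposition~\ref{proposition 2.5} and simply states that it is obtained ``by arguments parallel to those in the proof of Proposition~\ref{proposition 2.4},'' which is precisely the transcription you carry out. Your additional remarks---that uniform convexity is irrelevant here, that bounded orbits ensure $S_0<\infty$, and that the parity pattern survives descent---are all accurate and make explicit what the paper leaves implicit.
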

 Using \cite[Lemma 3.7]{n6}, \cite[Lemma 3.8]{n6} and Proposition \ref{proposition 2.5} and proceeding on the similar lines as in Theorem \ref{theorem 2.6}, we get the following result:
 \begin{theorem}\label{theorem 2.7}
    Let $\Omega$ and $\Delta$ be nonempty, closed and convex subsets of a uniformly convex Banach space $\mathfrak{X}$. Let $\Xi:\Omega \cup \Delta \rightarrow \Omega \cup \Delta$ be a cyclic orbital contraction mapping. Then
    
    (i) $\Xi$ has a best proximity point $\varsigma^* \in \Omega$,
    
    (ii) $\varsigma^*$ is the unique fixed point of $\Xi^2$,
    
    (iii) $\{\Xi^{2n}\varsigma\}$ converges to $\varsigma^*$ for every $\varsigma \in \Omega$,
    
    (iv) $\Xi\varsigma^*$ is a best proximity point of $\Xi$ in $\Delta$ and $\{\Xi^{2n}\vartheta \}  $ converges to $\Xi\varsigma^*$ for every $\vartheta \in B$.
\end{theorem}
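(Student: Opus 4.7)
The plan is to transport the argument of Theorem \ref{theorem 2.6} from the $CAT_p(0)$ setting to the uniformly convex Banach space setting by substituting \cite[Lemma 3.7]{n6} for Lemma \ref{lemma 2.2}, \cite[Lemma 3.8]{n6} for Lemma \ref{lemma 2.3}, and Proposition \ref{proposition 2.5} for Proposition \ref{proposition 2.4}. Starting from $\varsigma_0 \in \Omega$ with $\varsigma_{n+1} = \Xi \varsigma_n$, Proposition \ref{proposition 2.5} yields $\Vert \varsigma_{2n} - \varsigma_{2n+1} \Vert \to dist(\Omega,\Delta)$ and $\Vert \varsigma_{2n+1} - \varsigma_{2n+2} \Vert \to dist(\Omega,\Delta)$, after which \cite[Lemma 3.8]{n6} delivers $\Vert \varsigma_{2n} - \varsigma_{2n+2} \Vert \to 0$ and $\Vert \varsigma_{2n+1} - \varsigma_{2n+3} \Vert \to 0$.

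Next I would show $\{\varsigma_{2n}\}$ is Cauchy in $\Omega$. For $m > n$, rewrite $\varsigma_{2m} = \Xi^{2n} \varsigma_{2m-2n}$ and $\varsigma_{2n+1} = \Xi^{2n} \varsigma_1$, apply the cyclic orbital contraction inequality $2n$ times, and invoke boundedness of orbits to obtain an estimate of the form $\Vert \varsigma_{2m} - \varsigma_{2n+1} \Vert \le \eta^{2n} C + (1 - \eta^{2n}) dist(\Omega,\Delta)$ where $C$ depends only on the orbits of $\varsigma_0$ and $\varsigma_1$. Since $\eta^{2n} \to 0$, this gives $\Vert \varsigma_{2m} - \varsigma_{2n+1} \Vert < dist(\Omega,\Delta) + \epsilon$ for all $m > n$ sufficiently large. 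Then \cite[Lemma 3.7]{n6} applied to $\{\varsigma_{2n}\} \subset \Omega$ and $\{\varsigma_{2n+1}\} \subset \Delta$ forces $\{\varsigma_{2n}\}$ to be Cauchy, hence convergent to some $\varsigma^* \in \Omega$ by completeness. To identify $\varsigma^*$ as a best proximity point, I would imitate part (i) of Theorem \ref{theorem 2.6}: first establish $\Vert \varsigma^* - \Xi^{2n+1} \varsigma^* \Vert \to dist(\Omega,\Delta)$ via triangle inequality and the contraction estimate, then prove by contradiction that $S := \sup_{i-j \text{ odd}} \Vert \Xi^i \varsigma^* - \Xi^j \varsigma^* \Vert = dist(\Omega,\Delta)$, reducing the triple supremum in the contraction condition to $S$ and extracting a strict inequality that contradicts the choice of the attaining index. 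This yields $\Vert \varsigma^* - \Xi \varsigma^* \Vert = dist(\Omega,\Delta)$. Parts (ii), (iii), and (iv) then transfer verbatim from Theorem \ref{theorem 2.6}, with \cite[Lemma 3.8]{n6} replacing Lemma \ref{lemma 2.3} at each invocation.

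I expect the main obstacle to be the parity bookkeeping inside the triple supremum rather than any genuinely new analytical difficulty, since Proposition \ref{proposition 2.5} and the uniform convexity lemmas \cite[Lemma 3.7]{n6}, \cite[Lemma 3.8]{n6} supply precisely the Banach space analogues used throughout Theorem \ref{theorem 2.6}. In particular, in the uniqueness argument for $\Xi^2$ one must verify that when $p - q$ is even, terms like $\Vert \Xi^p (\Xi \varsigma^*) - \Xi^q \varrho \Vert$ collapse to either $\Vert \varsigma^* - \Xi \varrho \Vert$ or $\Vert \Xi \varsigma^* - \varrho \Vert$, while when $i - j$ or $k - l$ is odd the corresponding terms reduce to $dist(\Omega,\Delta)$ via the already established best proximity property. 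Once these identifications are in place, the inequality $\Vert \varsigma^* - \Xi \varrho \Vert \le \eta \Vert \varsigma^* - \Xi \varrho \Vert + (1 - \eta) dist(\Omega,\Delta)$ forces $\Vert \varsigma^* - \Xi \varrho \Vert = dist(\Omega,\Delta)$, and \cite[Lemma 3.8]{n6} concludes $\varsigma^* = \varrho$, completing the argument.
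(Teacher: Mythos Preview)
Your proposal is correct and matches the paper's approach exactly: the paper's proof of Theorem \ref{theorem 2.7} consists entirely of the sentence ``Using \cite[Lemma 3.7]{n6}, \cite[Lemma 3.8]{n6} and Proposition \ref{proposition 2.5} and proceeding on the similar lines as in Theorem \ref{theorem 2.6}, we get the following result.'' You have simply spelled out those similar lines in more detail than the paper itself does.
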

 \begin{corollary}
 Let $\Omega$ and $\Delta$ be two nonempty, closed and convex subsets of a uniformly convex space $\mathfrak{X}$. Let $\Xi:\Omega \cup \Delta \rightarrow \Omega \cup \Delta$ be a  mapping such that for all $\varsigma \in \Omega$ and $\vartheta \in \Delta$, we have 
 
(i) $ \Xi(\Omega) \subset \Delta$ and $\Xi(\Delta) \subset \Omega$,
    
(ii) $\Vert \Xi\varsigma-\Xi\vartheta\Vert \leq \eta 
\max \{\Vert \varsigma-\vartheta \Vert, \Vert \varsigma-\Xi\varsigma \Vert, \Vert \vartheta-\Xi\vartheta \Vert\} + (1- \eta) dist(\Omega,\Delta)$ for some $\eta \in (0,1)$.\\
Then $\Xi$ has a best proximity point. 
 \end{corollary}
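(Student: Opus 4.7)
The plan is to deduce the corollary from Theorem \ref{theorem 2.7} by recognizing the given mapping $\Xi$ as a cyclic orbital contraction mapping in the sense of Definition 2.1, and then invoking that theorem.

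I would first verify that hypothesis (ii) above implies condition (iii) of the cyclic orbital contraction definition. Each of the three terms inside the maximum on the right-hand side of (ii) is a special instance of the supremum appearing in Definition 2.1: $\|\varsigma-\vartheta\|=\|\Xi^0\varsigma-\Xi^0\vartheta\|$ corresponds to $(p,q)=(0,0)$ with $p-q$ even; $\|\varsigma-\Xi\varsigma\|=\|\Xi^0\varsigma-\Xi^1\varsigma\|$ corresponds to $(i,j)=(0,1)$ with $i-j$ odd; and similarly $\|\vartheta-\Xi\vartheta\|$ corresponds to $(k,l)=(0,1)$ with $k-l$ odd. Hence the maximum is dominated by the supremum and condition (iii) of Definition 2.1 is inherited with the same constant $\eta$. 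Condition (ii) of the definition coincides with hypothesis (i) above.

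Next, I would verify that $\Xi$ induces bounded orbits. Fix $\varsigma_0\in \Omega$ and set $\varsigma_n=\Xi^n\varsigma_0$. Applying hypothesis (ii) to the consecutive pair $(\varsigma_n,\varsigma_{n+1})$ of opposite parity, and using $\eta<1$ together with $\|\varsigma_n-\varsigma_{n+1}\|\geq dist(\Omega,\Delta)$ to rule out the case where the maximum is attained at $\|\varsigma_{n+1}-\varsigma_{n+2}\|$, yields the recursion $\|\varsigma_{n+1}-\varsigma_{n+2}\|\leq \eta\,\|\varsigma_n-\varsigma_{n+1}\|+(1-\eta)\,dist(\Omega,\Delta)$, so consecutive distances are uniformly bounded by some constant $K$. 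To bound arbitrary pairwise distances $r_n=\|\varsigma_0-\varsigma_n\|$, for even $n$ I would combine the triangle inequality $r_n\leq \|\varsigma_0-\varsigma_1\|+\|\Xi\varsigma_0-\Xi\varsigma_{n-1}\|$ with an application of hypothesis (ii) to the opposite-parity pair $(\varsigma_0,\varsigma_{n-1})$ to obtain $r_n\leq \eta\,r_{n-1}+C$ for a constant $C$ depending only on $K$, $\|\varsigma_0-\varsigma_1\|$ and $dist(\Omega,\Delta)$; for odd $n$ the triangle inequality alone gives $r_n\leq r_{n-1}+K$. Chaining these two recursions produces the two-step contraction $r_{n+2}\leq \eta r_n+(\eta K+C)$, and iterating bounds $\{r_n\}$ by a convergent geometric sum. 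The case $\varsigma_0\in \Delta$ is symmetric, so every orbit of $\Xi$ is bounded.

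Having verified all three conditions of Definition 2.1, $\Xi$ is a cyclic orbital contraction mapping and Theorem \ref{theorem 2.7} applies directly to produce a best proximity point. I expect the main obstacle to lie in the bounded-orbits verification: the alternation between $\Omega$ and $\Delta$ forces one to chain a genuine contraction step (for even indices) with a pure triangle-inequality step (for odd indices) before recovering a contraction on $r_n$, whereas condition (iii) of Definition 2.1 follows immediately from the dominated-terms observation.
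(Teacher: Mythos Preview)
Your approach is correct and is exactly the one the paper intends: the corollary is stated without proof, and the implicit argument is to recognise $\Xi$ as a cyclic orbital contraction mapping in the sense of Definition~2.1 and invoke Theorem~\ref{theorem 2.7}. Your verification that condition~(iii) of the definition follows from hypothesis~(ii) by domination of the maximum by the supremum is immediate and matches the paper's tacit reasoning.

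Where you go beyond the paper is in verifying condition~(i) of Definition~2.1 (bounded orbits), which the paper does not address explicitly in either corollary. Your argument for this is sound: the case split on where the maximum is attained shows that consecutive distances satisfy $\|\varsigma_{n+1}-\varsigma_{n+2}\|\leq \eta\|\varsigma_n-\varsigma_{n+1}\|+(1-\eta)\,dist(\Omega,\Delta)$ (when the maximum is $\|\varsigma_{n+1}-\varsigma_{n+2}\|$ one gets equality with $dist(\Omega,\Delta)$, which still satisfies the recursion), so they are bounded by $K=\|\varsigma_0-\varsigma_1\|$. The two-step chaining for $r_n=\|\varsigma_0-\varsigma_n\|$ is also correct; one minor point is that applying hypothesis~(ii) to $(\varsigma_0,\varsigma_{n-1})$ gives $\max\{r_{n-1},K\}$ rather than $r_{n-1}$ on the right, but since $\max\{r_{n-1},K\}\leq r_{n-1}+K$ this is absorbed into the constant $C$ and your conclusion $r_{n+2}\leq \eta r_n+D$ stands. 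This step is genuinely needed for the corollary as stated, so your proposal in fact supplies a detail the paper omits.
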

An example illustrating Theorem \ref{theorem 2.7} is given below. 
\begin{example}
\emph{In Example \ref{example2.2}, we see that  $\Omega$ and $\Delta$ are nonempty, closed and convex subsets of a uniformly convex Banach space $\mathfrak{X}=\mathbb{R}^2$. The mapping $\Xi$ is a cyclic orbital contraction mapping. Hence, applying  Theorem \ref{theorem 2.7}, we conclude that there exists $(-1,0) \in \Omega$ satisfying $\Vert (-1,0)-\Xi(-1,0)\Vert=\Vert(-1,0)-(1,0)\Vert=dist(\Omega,\Delta)$. Also, there exists $(1,0) \in \Delta$ satisfying $\Vert (1,0)-\Xi(1,0)\Vert=\Vert(1,0)-(-1,0)\Vert=dist(\Omega,\Delta)$.}
\end{example}
\section*{Acknowledgements}
The first author is grateful to the Government of India for providing financial support in the form of Institutional Fellowship.


\begin{thebibliography}{99}
\bibitem{n15} M. Bessenyei, The contraction principle in extended context, Publ. Math. Debrecen {\bf 89} (2016), no.~3, 287--295.

\bibitem{n18} M.~R. Bridson and A. Haefliger, Metric spaces of non-positive curvature. Springer-Verlag, Berlin (1999).

\bibitem{n16} A. Digar, R. Esp\'inola-Garc\'ia and G.~S.~R. Kosuru, A characterization of weak proximal normal structure and best proximity pairs, Rev. R. Acad. Cienc. Exactas F\'is. Nat. Ser. A Mat. RACSAM {\bf 116} (2022), no.~2, Paper No. 80, 7 pp.

\bibitem{n11} A.~A. Eldred, W.~A. Kirk and P. Veeramani, Proximal normal structure and relatively nonexpansive mappings, Studia Math. {\bf 171} (2005), no.~3, 283--293.  

\bibitem{n6}  A.~A. Eldred and P. Veeramani, Existence and convergence of best proximity points, J. Math. Anal. Appl. {\bf 323} (2006), no.~2, 1001--1006.

\bibitem{n7} R. Esp\'inola-Garc\'ia and A. Fern\'andez-Le\'on, On best proximity points in metric and Banach spaces, Canad. J. Math. {\bf 63} (2011), no.~3, 533--550.

\bibitem{n1} K. Fan, Extensions of two fixed point theorems of F.E. Browder, Math. Z., 112(1969), 234--240.

\bibitem{n17} M. Gabeleh, Remarks on the paper ``A characterization of weak proximal normal structure and best proximity pairs'', Rev. R. Acad. Cienc. Exactas F\'is. Nat. Ser. A Mat. RACSAM {\bf 118} (2024), no.~1, Paper No. 28, 9 pp.

\bibitem{n8} M. Gabeleh and C. Vetro, A note on best proximity point theory using proximal contractions, J. Fixed Point Theory Appl. {\bf 20} (2018), no.~4, Paper No. 149, 11 pp.

\bibitem{n21} A. Safari-Hafshejani, A. Amini-Harandi and M. Fakhar, Best proximity points and fixed points results for noncyclic and cyclic Fisher quasi-contractions, Numer. Funct. Anal. Optim. {\bf 40} (2019), no.~5, 603--619.

\bibitem{n13} M. Heged\H us and T. Szil\'agyi, Equivalent conditions and a new fixed point theorem in the theory of contractive type mappings, Math. Japon. {\bf 25} (1980), no.~1, 147--157.

\bibitem{n19} M.~A. Khamsi and S.~A. Shukri, Generalized CAT(0) spaces, Bull. Belg. Math. Soc. Simon Stevin {\bf 24} (2017), no.~3, 417--426.

\bibitem{n12} W.~A. Kirk, S. Reich and P. Veeramani, Proximinal retracts and best proximity pair theorems, Numer. Funct. Anal. Optim. {\bf 24} (2003), no.~7-8, 851--862.

\bibitem{n9} C. Mongkolkeha, P. Dechboon, S. Salisu and K.  Khammahawong, Coupled best proximity points for cyclic Kannan and Chatterjea contractions in ${\rm CAT}_{\rm p}(0)$ spaces, Carpathian J. Math. {\bf 40} (2024), no.~2, 343--361.

\bibitem{n3} J.B. Prolla, Fixed point theorems for set-valued mappings and existence of best approximants, Numer. Funct. Anal. Optim., 5(1982/83), no.~4, 449--455.

\bibitem{n10}  V.~S. Raj and P. Veeramani, Best proximity pair theorems for relatively nonexpansive mappings, Appl. Gen. Topol. {\bf 10} (2009), no.~1, 21--28.

\bibitem{n2} S. Reich, Approximate selections, best approximations, fixed points and invariant sets, J. Math. Anal. Appl., 62(1978), no.~1, 104--113.

\bibitem{n4} V.M. Sehgal and S.P. Singh, A generalization to multifunctions of Fan's best approximation theorem, Proc. Amer. Math. Soc., 102(1988), no.~3, 534--537.

\bibitem{n5} V.M. Sehgal and S.P. Singh, A theorem on best approximations, Numer. Funct. Anal. Optim., 10(1989), no.~1-2, 181--184.

\bibitem{n20} S.~A. Shukri, Existence and convergence of best proximity points in $\rm{CAT_p}(0)$ spaces, J. Fixed Point Theory Appl. {\bf 22} (2020), no.~2, Paper No. 48, 10 pp.

\bibitem{n22} T. Suzuki, M. Kikkawa and C. Vetro, The existence of best proximity points in metric spaces with the property UC, Nonlinear Anal. {\bf 71} (2009), no.~7-8, 2918--2926.

\bibitem{n14} W.~L. Walter, Remarks on a paper by F. Browder about contraction: ``Remarks on fixed point theorems of contractive type'', Nonlinear Anal. {\bf 5} (1981), no.~1, 21--25.
 \end{thebibliography}
\end{document}